\newtheorem{theorem}{Theorem}[section]
\newtheorem{conjecture}{Conjecture}
\newtheorem{example}{Example}
\newtheorem{lemma}[theorem]{Lemma}
\newtheorem{corollary}[theorem]{Corollary}
\theoremstyle{definition}
\newenvironment{Proof}{{\textit{Proof}.}\ }{~$\square$\vspace{0.2truecm}}
\newcommand{\Char}{\mbox{\rm char}}
\newcommand{\supp}{\mbox{\rm supp}}
\newcommand{\Z}{\mathbb{Z}}
\newcommand{\GL}{{\rm GL}}
\newcommand{\SL}{{\rm SL}}
\newcommand{\ts}[1]{\langle #1\rangle}
\newcommand\tab[1][1cm]{\hspace*{#1}}
\begin{document}

	\title[Permutable subgroups]{Permutable subgroups in $\GL_n(D)$ and applications to locally finite group algebras}

\author[Le Qui Danh]{Le Qui Danh}
\address{Faculty of Mathematics and Computer Science, VNUHCM-University of Science,	227 Nguyen Van Cu Str., Dist. 5, HCM-City, Vietnam; and Department of Mathematics, Mechanics and Informatics, University of Architecture, 196 Pasteur Str., Dist. 3, HCM-City,
	Vietnam}
\email{danh.lequi@uah.edu.vn}	

\author[Mai Hoang Bien]{Mai Hoang Bien}

\author[Bui Xuan Hai]{Bui Xuan Hai}
\address{Faculty of Mathematics and Computer Science, VNUHCM-University of Science,	227 Nguyen Van Cu Str., Dist. 5, HCM-City, Vietnam}
\email{mhbien@hcmus.edu.vn; bxhai@hcmus.edu.vn}

	\keywords{permutable subgroup; free subgroup; general linear group; group algebra; unit group.\\
		\protect \indent 2010 {\it Mathematics Subject Classification.} 16K20; 16S34; 20C99.}
	
	\maketitle
	
\begin{abstract} In this paper we study the existence of free non-abelian subgroups in non-central permutable subgroups of general skew linear groups and locally finite group algebras.
\end{abstract}

\section{Introduction} 
 	
 The question of existence of free groups (in this paper ``free group" means ``non-abelian free group") in subgroups of general skew linear groups have been studied since the last decades of 20-th century. The starting point is Tits' result \cite{Pa_Tits_72}, stating that every finitely generated subgroup of the general linear group $\GL_n(F)$ over a field $F$ either contains a free subgroup or is solvable-by-finite. It is natural to ask what happens if a field $F$ is replaced by a non-commutative division ring $D$. In \cite{Pa_Lichtman_77}, Lichtman constructed a division ring whose the multiplicative group contains a finitely generated non solvable-by-finite subgroup which does not contain free subgroups. The following conjecture due to Lichtman.
 \begin{conjecture}\label{conj:1}
 	The multiplicative group of a non-commutatvie division ring contains a  free subgroup.
 \end{conjecture}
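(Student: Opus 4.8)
The plan is to reduce, wherever possible, to the Tits alternative recalled in the introduction, since Tits already produces a free subgroup inside any finitely generated linear group over a field that fails to be solvable-by-finite; the entire difficulty is that a general division ring $D$ need not admit any faithful finite-dimensional representation over a field. The conjecture is in fact a well-known open problem, and Lichtman's construction in \cite{Pa_Lichtman_77} shows the obstruction is genuine: there a two-generator subgroup of some $D^*$ is not solvable-by-finite yet contains no free subgroup, so a naive ``local'' Tits argument cannot work in general. I would therefore first settle the finite-dimensional case and then isolate the extra input the general case demands.

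First I would treat $D$ finite-dimensional over its center $k$, say $[D:k]=n^2$. A splitting field $L/k$ yields $L\otimes_k D\cong \Mat_n(L)$ and hence an embedding $D^*\hookrightarrow \GL_n(L)$, so every finitely generated subgroup of $D^*$ is linear and subject to Tits' theorem. It then suffices to exhibit one finitely generated subgroup that is not solvable-by-finite. Since $D$ is non-commutative I may choose $a\in D^*\setminus k$ and $b\in D^*$ not commuting with $a$, so that $\langle a,b\rangle$ is non-abelian; the real task is to upgrade ``non-abelian'' to ``not solvable-by-finite''. The cleanest route, I expect, is not to argue abstractly but to run ping-pong directly: fixing a discrete valuation on $L$ adapted to $a$ and $b$, high powers of two well-chosen elements play ping-pong on the associated lattice and exhibit a free subgroup explicitly, bypassing the solvable-by-finite dichotomy.

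A point requiring care is the positive-characteristic form of Tits' theorem, where the free-subgroup conclusion needs the ground field not to be algebraic over its prime field $\mathbb{F}_p$. This holds automatically here, because a division ring algebraic over a finite field is a commutative field; hence a non-commutative finite-dimensional $D$ cannot have center algebraic over $\mathbb{F}_p$, and the characteristic-$p$ obstruction never arises.

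The hard part will be the genuinely infinite-dimensional case, where no embedding into any $\GL_n(L)$ is available and Tits' alternative has no bearing; as Lichtman's example warns, one cannot simply hunt for a non-solvable-by-finite finitely generated subgroup. The natural strategy is to pass to a finitely generated division subring $D_0\subseteq D$ containing a fixed non-commuting pair and then to find a place or specialization of $D_0$ onto a finite-dimensional, hence linear, division algebra that preserves non-commutativity, so that freeness can be detected there and pulled back. Constructing and controlling such valuation-theoretic specializations in full generality is exactly where the conjecture resists proof, and I expect it to be the decisive obstacle; this is presumably why the present paper instead establishes the free-subgroup conclusion only for the permutable subgroups described in the abstract.
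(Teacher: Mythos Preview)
The paper does not prove Conjecture~\ref{conj:1}; it is stated explicitly as an open problem (``Up to now, both these conjectures remain unsolved in general''), with only the special cases of uncountable center, centrally finite, and weakly locally finite division rings cited as settled elsewhere. Your proposal correctly recognizes this status and does not purport to give a proof, so there is no paper-proof to compare against.

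Your sketch of the centrally finite case via the embedding $D^*\hookrightarrow\GL_n(L)$ and the Tits alternative is the right framework and is essentially how Gon\c{c}alves handles that case in \cite{Pa_Go_84}, though you leave the key step---producing a finitely generated subgroup that is not solvable-by-finite, or running an explicit ping-pong---as a plan rather than an argument; note that your candidate $\langle a,b\rangle$ with $a,b$ merely non-commuting can be finite (e.g.\ $\langle i,j\rangle=Q_8$ in the real quaternions), so more is needed. One minor correction: the Tits alternative for \emph{finitely generated} linear groups holds over arbitrary fields, so the absolute-field obstruction you raise in positive characteristic does not arise at that step; it matters only for linear groups that are not finitely generated. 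Your diagnosis of the infinite-dimensional obstruction, the relevance of Lichtman's example, and the reason the present paper restricts attention to permutable subgroups are all accurate.
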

 Later, Gon\c calves and  Mandel  \cite{Pa_Go-Ma_84} posed the more general conjecture.
 \begin{conjecture}\label{conj:2} 
 Any non-central subnormal subgroup of the multiplicative group of a non-commutatvie division ring contains a  free subgroup.
 \end{conjecture}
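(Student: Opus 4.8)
The plan is to reduce to a finitely generated division ring and then to split according to whether $D$ is finite- or infinite-dimensional over its center. Given a non-central subnormal subgroup $N$ of $D^*$, the theorems of Herstein and Stuth guarantee that such an $N$ is non-abelian, so it contains two non-commuting elements $a,b$. Let $D_0$ be the division subring generated by $a$ and $b$. Since subnormality is inherited by intersections with subgroups, $N\cap D_0^*$ is a non-central subnormal subgroup of $D_0^*$, so I may replace $(D,N)$ by $(D_0,N\cap D_0^*)$ and assume $D$ is finitely generated over its prime field. Then $F=Z(D)$ is a finitely generated field, and either $[D:F]<\infty$ or $[D:F]=\infty$.

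In the finite-dimensional case the conjecture is a theorem of Gon\c calves, and the strategy I would follow is to realize $D$ inside $\Mat_n(K)$ for a maximal subfield $K$ via the left regular representation $D\hookrightarrow\End_K(D)\cong\Mat_n(K)$, where $n^2=[D:F]$, so that $N$ becomes a subnormal subgroup of a subgroup of $\GL_n(K)$. By Tits' alternative it then suffices to exhibit two elements of $N$ whose generated subgroup is not solvable-by-finite, whereupon the resulting free subgroup automatically lies in $N$. Producing such a pair—typically by combining a non-central element with a conjugate and exploiting the multiplicative structure of the algebra, since mere non-commutativity of $a,b$ does not preclude $\langle a,b\rangle$ from being metabelian—is precisely the delicate work carried out in Gon\c calves' argument, and yields a free subgroup of rank two inside $N$.

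The genuinely hard part is the infinite-dimensional case, where Conjecture \ref{conj:2} is open. When $[D:F]=\infty$ the left regular representation is no longer finite-dimensional, Tits' alternative is unavailable, and Lichtman's example \cite{Pa_Lichtman_77} shows that finitely generated subgroups of $D^*$ can be neither solvable-by-finite nor contain a free subgroup. My proposal here is valuation-theoretic: I would seek a valuation $v$ on $D$ whose value group surjects onto $\Z$ and for which two elements $g,h\in N$—for instance $g=a$ and $h=a^{b}$, both of which lie in $N$ since $a,b\in N$—acquire incommensurable values. Passing to the associated filtration, the high powers $g^{\pm m}$ and $h^{\pm m}$ concentrate in disjoint regions, and verifying the ping-pong inequalities then forces $\langle g,h\rangle$ to be free of rank two inside $N$. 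The main obstacle is the existence and control of such a valuation: a non-central subnormal subgroup need not be adapted to any valuation of $D$, and analyzing the residue division ring finely enough to separate the two elements is exactly where the provable cases stop and the full conjecture begins.

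For the permutable subgroups that are the subject of this paper, the reduction above is supported by the additional structural input that a permutable subgroup is ascendant (Stonehewer) and, in the relevant situations, subnormal; exploiting the identity $HK=KH$ more aggressively in the infinite-dimensional case—to manufacture the two incommensurable elements required for ping-pong—is the natural route toward establishing the results stated here.
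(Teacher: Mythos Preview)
The statement you are attempting to prove is not a theorem of the paper: it is \emph{Conjecture~\ref{conj:2}}, which the paper explicitly records as open in general. The paper provides no proof of it; it only cites the known partial results (Gon\c calves for centrally finite $D$, Hai--Ngoc for weakly locally finite $D$, Chiba for $D$ with uncountable center) and then works in those restricted settings. So there is no ``paper's own proof'' to compare against, and your task reduces to asking whether your proposal actually proves the conjecture. It does not, and you say so yourself.

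Your write-up is a research outline, not a proof. The reduction to a finitely generated division subring $D_0$ and the appeal to Gon\c calves in the centrally finite case are fine (and standard). But in the infinite-dimensional case you explicitly concede that the conjecture is open, that Tits' alternative is unavailable, and that ``the existence and control of such a valuation \ldots\ is exactly where the provable cases stop and the full conjecture begins.'' That sentence is an admission of a gap, not a step in an argument. A valuation with the properties you want---surjecting onto $\Z$ and separating $a$ from $a^{b}$ enough to run ping-pong---is not known to exist on an arbitrary finitely generated infinite-dimensional division ring, and producing one would already resolve a famous problem. There is also a smaller slip in the reduction: from ``$D_0$ is generated as a division ring over its prime field by $a,b$'' you cannot conclude that $Z(D_0)$ is a finitely generated field; the center of a finitely generated division ring is not known to be finitely generated in general, so your case split ``$[D_0:Z(D_0)]<\infty$ versus $=\infty$'' is not as clean as it looks.

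In short: the paper does not prove this statement, your proposal correctly isolates the known case and the open case, but the open case remains open in your proposal as well.
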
  
 
 Up to now, both these conjectures remain unsolved in general. They are solved affirmatively in several cases:
 
 $\bullet$ Conjecture \ref{conj:1} for division rings with uncountable centers by Chiba in \cite{Pa_Chiba_96};
 
 $\bullet$ Conjecture \ref{conj:2} for centrally finite division rings by Gon\c calves in \cite{Pa_Go_84};
 
 $\bullet$ Conjecture \ref{conj:2} for weakly locally finite division rings by B. X. Hai and N. K. Ngoc in \cite{Pa_Hai-Ngoc_13}.

More generally, the problem on the existence of free subgroups in subnormal subgroups of general skew linear groups is studied in \cite{Bo_Shi-Weh_86} and \cite{Pa_Ngoc-Bien-Hai_17}. 
  	
In this paper, we study the problem on the existence of free subgroups in permutable subgroups of  general skew linear groups, and applying obtained results, we shall study this problem  for groups of units of algebras such as locally finite-dimensional algebras over fields including group algebras of locally finite groups, weakly locally finite division rings and matrix rings over weakly locally finite division rings. We refer to the survey work \cite{Pa_GoRi_13} by Gon\c calves and Rio for more information on free subgroups in groups of unites of group algebras. 

The paper is presented as follows: 

Section 2 introduces the notion of permutable subgroups and some their basic properties. Section 3 devotes to the proof of the fact that in the general skew linear group $\GL_n(D)$ over a division ring $D$ of degree $n\geq 2$, every permutable subgroup is normal. So, the problem on the existence of free subgroups in permutable subgroups of  general skew linear groups of degree $n\geq 2$ reduces to that for normal subgroups. Section 4 study the difference between the notions of permutable subgroups and subnormal subgroups. Moreover, we give some examples of division rings and group algebras whose multiplicative groups contain permutable subgroups that are not subnormal. Using the result of Section 3, in Section 5, we prove that in the general linear group $\GL_n(D)$ over a weakly locally finite division ring $D$, every permutable subgroup contains a free subgroup. In the end, we devote Section 6 to study an open problem on the existence of free subgroups of permutable subgroups in locally finite group algebras.

\section{Definitions and preliminaries} 
	
Given a group $G$, and $N$ its subgroup. We recall some definitions from the theory of groups. 

We say that $N$ is \textit{subnormal} in $G$ if there exists a finite sequence of subgroups of length $r+1$
$$N=N_r\triangleleft N_{r-1}\triangleleft \cdots \triangleleft N_1 \triangleleft N_0=G.$$ 
If no such a sequence of lesser length exists, then we say that $N$ is subnormal of \textit{defect} $r$ in $G$. 

We say that $N$ is  \textit{permutable} in $G$ if $NM=MN$ for any subgroup $M$ of $G$. The following lemma is useful.
\begin{lemma}\label{d2.1}
Let $G$ be a group. The following assertions are equivalent for a subgroup $N$ of $G$.
\begin{enumerate}
				\item For every subgroup $M$ of $G$, the product $NM$ is a subgroup of $G$.
				\item For every subgroup $M$ of $G$, the product  $MN$ is a subgroup of $G$.				
				\item For every subgroup $M$ of $G$, the product $NM$ equals to $MN$.
				\item For every cyclic subgroup $\langle x\rangle$ of $G$, the product $N\langle x\rangle $ equals to $\langle x\rangle N$. 
				\item For every elements $a\in N$ and $x\in G$, there exist $a'\in N$ and  an  integer $n$ such that $ax=x^na'$.
				\item For every elements $a\in N$ and $x\in G$, there exist $a'\in N$ and  an integer $n$ such that $xa=a'x^n$.
\end{enumerate}	
\end{lemma}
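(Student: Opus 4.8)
The plan is to reduce everything to the elementary fact that, for two subgroups $H,K$ of a group, the product $HK$ is a subgroup if and only if $HK=KH$: the ``if'' direction holds because $HK$ is then closed under products, since $HK\cdot HK=H(KH)K=H(HK)K=HK$, and under inverses, since $(HK)^{-1}=K^{-1}H^{-1}=KH=HK$; the ``only if'' direction follows from $HK=(HK)^{-1}=K^{-1}H^{-1}=KH$. Applying this with $H=N$ and $K=M$ immediately yields the equivalence of (1), (2) and (3), because (1) and (2) assert exactly that $NM$, respectively $MN$, is a subgroup, while (3) is the condition $NM=MN$.

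Next I would establish $(3)\Leftrightarrow(4)$ together with the cheap implications $(4)\Rightarrow(5)$ and $(4)\Rightarrow(6)$. Here $(3)\Rightarrow(4)$ is immediate on taking $M=\langle x\rangle$. For $(4)\Rightarrow(3)$ the idea is to localize at a single element: given a subgroup $M$ and an element $am\in NM$ with $a\in N$, $m\in M$, condition (4) applied to $x=m$ gives $am\in N\langle m\rangle=\langle m\rangle N$, so $am=m^{k}a'$ with $m^{k}\in M$ and $a'\in N$, whence $NM\subseteq MN$; the reverse inclusion is obtained symmetrically, so $NM=MN$. Finally $(4)\Rightarrow(5)$ and $(4)\Rightarrow(6)$ are just the observations that $ax\in N\langle x\rangle=\langle x\rangle N$ and $xa\in\langle x\rangle N=N\langle x\rangle$, read off as the equalities $ax=x^{n}a'$ and $xa=a'x^{n}$.

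The only implications requiring genuine work are $(5)\Rightarrow(4)$ and its mirror $(6)\Rightarrow(4)$, and I expect this to be the main obstacle, precisely because (5) is asymmetric: it lets one push an element of $N$ to the right past a power of $x$, but says nothing a priori about the reverse direction. Starting from (5), I would first prove $N\langle x\rangle\subseteq\langle x\rangle N$ for every $x$ by induction on the exponent, repeatedly rewriting a word $ax^{k}$ and collecting the powers of $x$ on the left, using (5) for $k>0$ and (5) applied to $x^{-1}$ in place of $x$ for $k<0$. The reverse inclusion $\langle x\rangle N\subseteq N\langle x\rangle$ does not follow from the same manipulation, and this is where an inverse trick enters: for $g=x^{m}a\in\langle x\rangle N$ one has $g^{-1}=a^{-1}x^{-m}\in N\langle x\rangle\subseteq\langle x\rangle N$ by what was just proved, say $g^{-1}=x^{s}c$ with $c\in N$, and hence $g=c^{-1}x^{-s}\in N\langle x\rangle$. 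This gives $N\langle x\rangle=\langle x\rangle N$, which is (4); the implication $(6)\Rightarrow(4)$ runs identically with left and right interchanged. Assembling the pieces yields $(1)\Leftrightarrow(2)\Leftrightarrow(3)\Leftrightarrow(4)\Leftrightarrow(5)\Leftrightarrow(6)$.
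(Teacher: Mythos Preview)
Your proof is correct, and the overall logical skeleton matches the paper's: both reduce $(1)\Leftrightarrow(2)\Leftrightarrow(3)$ to the standard ``$HK$ is a subgroup iff $HK=KH$'' fact, take $(3)\Rightarrow(4)$ as trivial, and then close a cycle through the element-wise conditions $(5)$, $(6)$.

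The one noteworthy difference is how the cycle is closed. The paper argues $(6)\Rightarrow(1)$ directly: given a subgroup $M$ and $a\in M$, $b\in N$, a single application of $(6)$ gives $ab=b'a^{n}$ with $a^{n}\in M$ (because $M$ is a subgroup), so $MN\subseteq NM$, hence $NM$ is a subgroup. This avoids any induction and any separate treatment of the reverse inclusion. By contrast, you close via $(5)\Rightarrow(4)$ using an induction on the exponent followed by the inverse trick, and then $(4)\Rightarrow(3)$ by the same localization idea the paper uses. Your route works, but note that the induction is unnecessary: since $(5)$ is quantified over all $x\in G$, you may apply it with $x$ replaced by $x^{k}$ to get $ax^{k}=(x^{k})^{n}a'\in\langle x\rangle N$ in one step. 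With that simplification, your $(5)\Rightarrow(4)$ plus $(4)\Rightarrow(3)$ is essentially a repackaging of the paper's $(6)\Rightarrow(1)$, just split across two implications and phrased for cyclic $M$ first. The paper's organization is marginally more economical; yours is more explicit about the one-sided nature of $(5)$ and where the inverse trick enters.
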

\begin{Proof}
The conclusions (1) $\Leftrightarrow$ (2) $\Leftrightarrow$ (3) and (4) $\Leftrightarrow$ (5) $\Leftrightarrow$ (6) are clear by definition. The implication (3) $\Rightarrow$ (4) is trivial. Now we show the implication (6) $\Leftrightarrow$ (1). Assume that (6) holds, that is, For every elements $a\in N$ and $x\in G$, there exist $a'\in N$ and  an integer $n(a,x)$ such that $xa=a'x^{n(a,x)}$. Let $M$ be a subgroup of $G$. We must show that $N M$ is a subgroup. It suffices to show that $ab\in N\cdot M$ for every $a\in M, b\in N$. Indeed, for every $a\in M$ and $b\in N$, there exist $b'\in N$ and an integer $n(a,b)$ such that $ab=b' a^{n(a,b)}$. Hence, $ab\in NM$.
\end{Proof}
		
Permutable subgroups sometimes are called \textit{quasinormal subgroups} (e.g., see \cite{Pa_Gr_75, PaTh_67}). In this paper, we prefer to use the terminology of permutable subgroups according to \cite{Pa_St_72}. For the convenience, we use the symbol $N \le_p G$ to denote the fact that $N$ is a permutable subgroup of $G$. Clearly, every normal subgroup is permutable. However, a subnormal subgroup may not be permutable in a group (a simple example of a dihedral group of order $8$ is given in \cite[Section 13.2]{Bo_Robinson_91}). Later, Stonehever \cite[Theorem B]{Pa_St_72} proves more general result stating that in a finitely generated group, every permutable subgroup is subnormal. So, it is natural to ask whether in any group, every permutable subgroup is subnormal? However, according to Stonehewer, in general, permutable subgroups may not be subnormal (see \cite[Page 2]{Pa_St_72}). For the convenience of further use, we restate this fact in the following lemma.
\begin{lemma}{\rm \cite[Page 2]{Pa_St_72}}\label{non-subnormal} There exists a group containing a permutable subgroup that is not subnormal.
\end{lemma}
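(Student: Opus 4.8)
The plan is to exhibit a single group witnessing the statement. Since \cite[Theorem B]{Pa_St_72}, recalled above, shows that permutable subgroups of finitely generated groups are subnormal, any such witness must be infinitely generated, and the natural place to look is a divisible abelian $p$-group. Fix an odd prime $p$, let $A=C_{p^\infty}=\langle a_1,a_2,\dots\mid a_1^{p}=1,\ a_{i+1}^{p}=a_i\rangle$ be the Pr\"ufer group, and let $b$ act on $A$ by the power automorphism $a^{b}=a^{1+p}$, which has infinite order because $1+p$ has infinite order in $\mathbb{Z}_p^{\times}$. Form $G=A\rtimes\langle b\rangle$ with $\langle b\rangle\cong\mathbb{Z}$ and put $H=\langle b\rangle$; for $p=2$ one uses $a^{b}=a^{5}$ instead, for the reason noted below.

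First I would check that $H$ is permutable, for which Lemma~\ref{d2.1} lets me verify only condition (5): for all $b^{m}\in H$ and $g\in G$ there should be $b^{k}\in H$ and $n\in\mathbb{Z}$ with $b^{m}g=g^{n}b^{k}$. Conjugation by $b$ raises an element of $A$ to a $p$-adic unit power, $b^{m}ab^{-m}=a^{(1+p)^{-m}}$, so the case $g\in A$ is immediate from $b^{m}g=g^{(1+p)^{-m}}b^{m}$. For a general $g=ab^{\ell}$ one has $g^{n}=a^{d_n}b^{\ell n}$ with $d_n$ an explicit geometric sum in $(1+p)^{-1}$; matching the $b$-exponents fixes $k$, and matching the $A$-parts reduces the claim to finding $n$ with $d_n\equiv(1+p)^{-m}\pmod{p^{j}}$, where $p^{j}$ is the order of the $A$-component of $g$. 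Such an $n$ exists because for $p$ odd the element $1+p$ generates the cyclic group of principal units modulo $p^{j}$; for $p=2$ these units are not cyclic, which is precisely why one must switch to $a^{b}=a^{5}$, a generator of $1+4\mathbb{Z}_2$.

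Next I would show that $H$ is not subnormal, which is the short half. From $a^{b}=a^{1+p}$ one computes $[a,b]=a^{p}$, so $[A,H]=A^{p}=A$ since $A$ is $p$-divisible; hence the normal closure obeys $H^{G}\supseteq[A,H]=A$ and therefore $H^{G}=G$. Forming the iterated normal closures $K_0=G$ and $K_{i+1}=H^{K_i}$ gives $K_1=H^{G}=G$ and hence $K_i=G$ for every $i$. A subnormal subgroup of defect $r$ is reached by this descending chain in at most $r$ steps, but our chain never drops below $G\supsetneq H$; thus $H$ is not subnormal, and $G$ is the required group.

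The main obstacle is the permutability verification rather than the non-subnormality: divisibility of $A$ makes $H^{G}=G$ for free, but the same automorphism must also be tame enough---a power automorphism acting trivially on $A/A^{p}$---for $H$ to permute with every cyclic subgroup. Reconciling these two demands is what forces the universal power automorphism and the odd/even distinction, and the congruence bookkeeping of condition (5) for cyclic subgroups with nontrivial $b$-component is where the genuine work sits. This is, in essence, Stonehewer's example.
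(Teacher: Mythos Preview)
The paper supplies no proof of this lemma at all: it is stated with a bare citation to \cite[Page~2]{Pa_St_72} and immediately followed by unrelated material, so there is nothing to compare your argument against line by line.

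Your construction is essentially Stonehewer's own example, and your sketch is correct. For the non-subnormality half, the computation $[A,H]=A^{p}=A$ is exactly what forces $H^{G}=G$ and makes the descending normal-closure series stall. For the permutability half, your reduction via Lemma~\ref{d2.1}(5) to the congruence $d_{n}\equiv(1+p)^{-m}\pmod{p^{j}}$ is right, and the point you leave implicit---that $n\mapsto d_{n}\bmod p^{j}$ is a bijection---follows from the lifting-the-exponent identity $v_{p}(u^{n}-1)=v_{p}(u-1)+v_{p}(n)$ for odd $p$, which gives $v_{p}(d_{n})=v_{p}(n)$; this is precisely why one must pass to $a^{b}=a^{5}$ when $p=2$. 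So your proposal is sound and in fact delivers more than the paper, which only records the citation.
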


Assume that $f : G_1\to G_2$ is a group morphism, $N_1\le _p G_1$ and $N_2\le _p G_2$. Then, by Lemma~\ref{d2.1}, $f(N_1)\le _pf(G_1)$ and $f^{-1}(N_2)\le_p f^{-1}(G_2)$. Moreover, if $N_2$ is non-subnormal permutable in $G_2$, so is $f^{-1}(N_2)$ in $f^{-1}(G_2)$.

Given a group $G$ and $H$ its subgroup. Recall that  $$H_G=Cor_G(H):=\bigcap_{g\in G} g^{-1}Hg$$ is the \textit{core} of $H$ in $G$, and it is clear that $H_G$ is the biggest normal subgroup of $G$ contained in $H$. Also,
$$H^G:=\ts{g^{-1}Hg\vert g\in G}$$
is the \textit{normal closure} of $H$ in $G$ and it is the smallest normal subgroup of $G$ containing $H$. The following lemma due to Gross.
\begin{lemma}\label{lem:2.3} {\rm (\cite[Theorem 2]{Pa_Gr_75})} Let $G$ be any group and $H$ its permutable subgroup. Assume that there exists an infinite cyclic subgroup $C$ of $G$ such that $H\cap C=1$. Then, $H$ is normal in $H^G$ and $H/H_G$ is abelian. 
\end{lemma}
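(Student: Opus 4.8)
My plan is to reduce both assertions to a single normalization statement and to read that statement off the element-wise description of permutability in Lemma~\ref{d2.1}, using that $C=\langle c\rangle$ is torsion-free in an essential way. First I would record a convenient normal form. Since $H\le_p G$ the set $HC=CH=:K$ is a subgroup, and because $H\cap C=1$ every element of $K$ has a unique expression $c^{i}h$ with $i\in\Z$ and $h\in H$: if $c^ih=c^jh'$ then $c^{i-j}=h'h^{-1}\in H\cap C=1$. Thus there is a well-defined exponent function $\delta\colon K\to\Z$, $\delta(c^ih)=i$. By Lemma~\ref{d2.1}(5), for every $h\in H$ there are $h'\in H$ and $n(h)\in\Z$ with $hc=c^{n(h)}h'$, so that $c^{-1}hc=c^{\,n(h)-1}h'$ and $\delta(c^{-1}hc)=n(h)-1$. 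The entire argument turns on showing $n(h)=1$ for all $h$, i.e.\ that $c$ normalizes $H$.

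Grant for the moment the normalization statement in the form needed, namely that every conjugate $g^{-1}Hg$ normalizes $H$. For assertion~(1), $H^G$ is generated by the conjugates $g^{-1}Hg$, and Lemma~\ref{d2.1}(6) (applied with $x=g^{-1}$) writes a typical element $g^{-1}hg$ as $h_\ast g^{k}$ with $h_\ast\in H$ and $k\in\Z$; conjugating $H$ by such an element equals conjugating by $g^{k}$ up to the harmless inner factor $h_\ast\in H$, so once the relevant powers normalize $H$ every generator of $H^G$ does, giving $H\triangleleft H^G$. For assertion~(2), since $H_G=\bigcap_{g}g^{-1}Hg$ it suffices to prove $[H,H]\le g^{-1}Hg$ for all $g$, i.e.\ $[gHg^{-1},gHg^{-1}]\le H$ for every conjugate $gHg^{-1}$ of $H$. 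Once $H\triangleleft HC$ the quotient $HC/H$ is generated by the image of $c$, hence cyclic and abelian, which is the prototype showing that a single conjugate is abelian modulo $H$; applying this to each cyclic section generated by a conjugate yields $[gHg^{-1},gHg^{-1}]\le H$, whence $[H,H]\le H_G$ and $H/H_G$ is abelian. These reductions are formal; the real work is the normalization statement.

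The crux, and where I expect the genuine difficulty, is proving that $c$ normalizes $H$, and this is exactly where $H\cap C=1$ with $c$ of infinite order is indispensable. Let $H^{K}$ be the normal closure of $H$ in $K=HC$; then $K/H^{K}$ is generated by the image of $c$, hence cyclic, and $H\triangleleft K$ is equivalent to $H^{K}=H$, i.e.\ to $\langle c\rangle\cap H^{K}=1$ together with $K/H^{K}\cong\Z$. To force $n(h)=1$ I would track the exponents $\delta(c^{-r}hc^{r})$: a single relation $hc=c^{n(h)}h'$ with $n(h)\neq1$ would, upon iterating conjugation by $c$, make these exponents drift without bound, whereas each $c^{-r}hc^{r}$ is a conjugate of the fixed element $h$ and cannot escape the finitely many $H$-cosets that the powers of $h$ visit; torsion-freeness of $\langle c\rangle$ leaves no room for the periodic compensation that a finite cyclic section would permit, so $K/H^{K}$ cannot be finite cyclic without producing a nontrivial element of $H\cap C$. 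Converting this ``unbounded drift'' heuristic into a rigorous exclusion of $n(h)\neq1$ — genuinely using $\langle c\rangle\cong\Z$ to kill the $p$-group-type behaviour that permutable non-normal subgroups exhibit in general — is the main obstacle. Passing from the single probe $C$ to every conjugate $g^{-1}Hg$ then relies on the structural fact, standard in the theory of permutable subgroups, that $H^{G}/H_{G}$ is periodic, so that the one torsion-free probe pins down all infinite-order behaviour while the power-automorphism action of $H$ on the cyclic sections of $H^{G}/H_{G}$ accounts for the rest.
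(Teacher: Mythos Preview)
The paper does not prove this lemma at all; it is simply quoted from Gross \cite[Theorem~2]{Pa_Gr_75} and used as a black box, so there is no in-paper argument to compare against. Your sketch, however, has genuine gaps beyond the one you flag. The ``unbounded drift'' heuristic is not sound as stated: from $c^{-1}hc=c^{\,n(h)-1}h'$ one obtains $\delta(c^{-r}hc^{r})=\sum_{i=0}^{r-1}\bigl(n(h_{i})-1\bigr)$ for a sequence $h_{0}=h,h_{1},\dots$ in $H$, and nothing prevents these increments from cancelling, so the partial sums need not diverge even if some $n(h)\neq1$. Your remark that the conjugates ``cannot escape the finitely many $H$-cosets that the powers of $h$ visit'' is empty, since all powers of $h$ lie in the single coset $H$ and $c^{-r}hc^{r}$ is not a power of $h$. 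The step $H\triangleleft H\langle c\rangle$ is true, but its proof (in Stonehewer and then Gross) goes through index comparisons among the subgroups $H\langle c^{m}\rangle$, not through a drift bound.

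More seriously, your passage from ``$c$ normalises $H$'' to ``every conjugate $g^{-1}Hg$ normalises $H$'' is unjustified. Writing $g^{-1}hg=h_{\ast}g^{k}$ via Lemma~\ref{d2.1} reduces the question to whether $g^{k}$ normalises $H$, but $g$ is an arbitrary element of $G$---possibly of finite order, possibly with $\langle g\rangle\cap H\neq1$---so the single torsion-free probe $C$ tells you nothing about $g^{k}$. The assertion that $H^{G}/H_{G}$ is periodic, which you invoke as ``standard'', is not an available input here; it is essentially part of what Gross establishes, so appealing to it is circular. Finally, even granting that every conjugate normalises $H$, your derivation of $H/H_{G}$ abelian does not go through: knowing that $H\langle c\rangle/H$ is cyclic says nothing about $[gHg^{-1},gHg^{-1}]\le H$ for arbitrary $g$, and two elements of $gHg^{-1}$ that both normalise $H$ have no reason to commute modulo $H$. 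Gross obtains the abelian quotient by a separate argument after reducing to the core-free case.
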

From this lemma, we deduce the following result which is very useful for the next study.			
\begin{lemma}\label{radical} Let $G$ be any group and $H$ its permutable subgroup. Then, either 
\begin{enumerate}
\item 	$G$ is radical over $H$, that is, for every element $x\in G$, there exists a positive integer $n_x$ such that $x^{n_x}\in H$, or
\item $H$ is normal in $H^G$ and $H/H_G$ is abelian group. Consequently, $H$ is subnormal in $G$ of defect at most $2$.
\end{enumerate}

\end{lemma}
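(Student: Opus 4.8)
The plan is to split into two cases according to whether $G$ contains an infinite cyclic subgroup $C$ meeting $H$ trivially, since exactly this hypothesis is what Lemma~\ref{lem:2.3} requires. I would first observe that if such a $C$ exists, then conclusion~(2) is almost immediate: Lemma~\ref{lem:2.3} gives at once that $H$ is normal in $H^G$ and that $H/H_G$ is abelian, so it only remains to extract the subnormality statement. For that I would invoke the chain
$$H \triangleleft H^G \triangleleft G,$$
where the first relation is the conclusion of Lemma~\ref{lem:2.3} and the second holds because the normal closure $H^G$ is by definition normal in $G$. This is a subnormal series of length two from $H$ to $G$, so $H$ is subnormal of defect at most $2$.

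The remaining case is that no infinite cyclic subgroup of $G$ meets $H$ trivially; here I would prove conclusion~(1), that $G$ is radical over $H$. Fix $x \in G$. If $x$ has finite order $m$, then $x^m = 1 \in H$ and we may take $n_x = m$. If $x$ has infinite order, then $\langle x\rangle$ is an infinite cyclic subgroup of $G$, so by the standing assumption $H \cap \langle x\rangle \neq 1$; thus there is a nonzero integer $k$ with $x^k \in H$, and replacing $k$ by $-k$ if necessary (legitimate since $H$ is a subgroup) we obtain a positive $n_x$ with $x^{n_x} \in H$. In either case some positive power of $x$ lies in $H$, which is precisely radicality.

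Since Lemma~\ref{lem:2.3} does essentially all of the real work, there is no serious obstacle here; the proof is a clean case split keyed to the trivial-intersection hypothesis. The only point requiring a little care is the radical case, where one must treat torsion and non-torsion elements separately and remember to pass to a positive exponent in the latter situation. I would also note that the abelianity of $H/H_G$ plays no role in deriving subnormality---the defect bound comes solely from the two-step normal chain $H \triangleleft H^G \triangleleft G$---so it is recorded in conclusion~(2) merely because Lemma~\ref{lem:2.3} already furnishes it.
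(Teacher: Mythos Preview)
Your proposal is correct and follows essentially the same approach as the paper: both arguments hinge on producing an infinite cyclic subgroup $C$ with $H\cap C=1$ and then invoking Lemma~\ref{lem:2.3}. The paper argues by contraposition (assume $G$ is not radical over $H$, take $x$ with $x^n\notin H$ for all $n>0$, and apply Lemma~\ref{lem:2.3} to $\langle x\rangle$), whereas you split cases on the existence of such a $C$ directly; your version is slightly more explicit in checking that the witnessing element has infinite order and that the intersection is trivial, points the paper leaves implicit.
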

\begin{Proof}
Assume that $G$ is not radical over $H$, that is, there exists an element $x\in G$ such that $x^n\not\in H$ for every positive integer $n$. Then, by Lemma \ref{lem:2.3}, $H\trianglelefteq H^G \trianglelefteq G$, so $H$ is subnormal in $G$ of defect at most $2$.
\end{Proof}
\begin{corollary}\label{cor:2.5} Let $D$ be a division ring with center $F$ and $G$ a permutable subgroup of the multiplicative group $D^*$ of $D$. If $G$ is radical over $F$, then $G$ is subnormal in $D^*$ of defect at most $2$, and $G/G_{D^*}$ is an abelian group. 
\end{corollary}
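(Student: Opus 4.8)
The plan is to feed the pair consisting of the ambient group $D^*$ and its permutable subgroup $G$ into Lemma~\ref{radical}. That lemma offers a dichotomy: either $D^*$ is radical over $G$, or else $G$ is normal in its normal closure $G^{D^*}$, the quotient $G/G_{D^*}$ is abelian, and $G$ is subnormal in $D^*$ of defect at most $2$. The second alternative is literally the conclusion of the corollary, so there is nothing to do in that case; the whole point is to show that the first alternative cannot occur without also forcing that same conclusion.

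So suppose $D^*$ is radical over $G$. I would chain the two radical conditions: for an arbitrary $y\in D^*$, radicality over $G$ gives a positive integer $m$ with $y^m\in G$, and then radicality of $G$ over $F$ gives a positive integer $k$ with $(y^m)^k\in F$, so $y^{mk}\in F$. As $y$ was arbitrary, $D^*$ is radical over its center $F$. Now I would invoke the classical theorem that a division ring which is radical over its center is commutative; it yields $D=F$, so that $D^*=F^*$ is abelian. In an abelian group every subgroup is normal, whence $G\trianglelefteq D^*$ (subnormal of defect $1\le 2$) and $G$ coincides with its own core $G_{D^*}$, so that $G/G_{D^*}$ is trivial and a fortiori abelian. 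Thus the first alternative, too, delivers the conclusion.

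The single non-formal ingredient, and hence the main obstacle, is the appeal to the fact that a division ring radical over its center must be commutative; the remainder is a routine composition of the radical conditions together with the elementary behaviour of normality and cores inside an abelian group.
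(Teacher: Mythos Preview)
Your proof is correct and follows essentially the same approach as the paper: invoke Lemma~\ref{radical}, and in the alternative where $D^*$ is radical over $G$, chain the radical conditions and apply Kaplansky's theorem. The only cosmetic difference is that the paper first reduces to the case where $D$ is non-commutative and then derives a contradiction from that alternative, whereas you let the commutative case fall out and verify the conclusion directly there.
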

\begin{proof} Clearly we can suppose that $D$ is non-commutative. If $D^*$ is radical over $G$, then $D^*$ is also radical over $F$ because $G$ is radical over $F$ by the supposition. By Kaplansky's theorem in \cite{Pa_Kaplansky_1951}, $D$ is commutative, a contradiction. Hence, $D^*$ is not radical over $G$. By Lemma \ref{radical}, $G$ is subnormal in $D^*$ of defect at most $2$, and $G/G_{D^*}$ is an abelian group.
\end{proof} 
\section{Permutable subgroups in $\GL_n(D)$}
	
Let $D$ be a division ring and $n$ a positive integer. In this section, we study permutable subgroups in the general linear group $\GL_n(D)$ of degree $n$ over $D$. From the definitions, it is evident that in any group, every normal subgroup is permutable. Our main goal in this section is to show that if $n\geq 2$, then every permutable subgroup of $\GL_n(D)$ is normal. 
	
We first need the following function from GAP (a system for computational discrete algebra) to consider the case when $n=2$ and the cardinality of $D$ is at most $3$. This function works as follows: for a finite group $G$, the function checks whether $G$ contains a non-normal permutable subgroup or not. If $G$ contains a non-normal permutable subgroup, then the function will be received \textit{false}; otherwise, it will be \textit{true}.
	\bigskip
	
	{\tt 
	\tab gap>  LoadPackage( "sonata" );

	\tab \tab LoadPackage("permut");

\tab CheckAllPermutableSubgroupsAreNormalSubgroups := function(G)

\tab \tab local lstSubgroups, subgroup, lstPermutableSubgroups, 

\tab \tab lstPermutableSubgroupsNOTNormal;
\bigskip

\tab \tab lstSubgroups := Subgroups(G);

\tab \tab lstPermutableSubgroups := [];

\tab \tab for subgroup in lstSubgroups do

\tab \tab \tab if IsPermutable(G,subgroup) then

\tab\tab\tab\tab Add(lstPermutableSubgroups, subgroup);

\tab\tab\tab fi;

\tab \tab od;
\bigskip

\tab \tab lstPermutableSubgroupsNOTNormal := [];

\tab \tab for subgroup in lstPermutableSubgroups do

\tab \tab \tab if IsNormal(G,subgroup)=false then

\tab\tab\tab\tab Add(lstPermutableSubgroupsNOTNormal, subgroup);

\tab\tab\tab fi;

\tab \tab od;
\bigskip

\tab\tab  if Length(lstPermutableSubgroupsNOTNormal) = 0 then

\tab \tab\tab return true;

\tab \tab else

\tab \tab \tab return false;

\tab \tab fi;

\tab end; }
 	\bigskip
 	
 	Obviously, when the cardinality of $G$ is too large, this function would not work or it would waste too much time. However, we apply this function to just two groups $\GL_2(\Z/2\Z)$ whose cardinality is $6$ and $\GL_2(\Z/3\Z)$ whose cardinality is $48$. In fact, using the above function, we show the following result.
	\begin{lemma}\label{lem3.1} Let $G$ be either $\GL_2(\Z/2\Z)$ or $\GL_2(\Z/3\Z)$.
		Every permutable subgroup of $G$ is normal. $\square$
	\end{lemma}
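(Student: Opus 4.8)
The plan is to treat the two groups separately: the first yields to a bare-hands count, while the second is best handled by combining the normal-subgroup structure of $\GL_2(\Z/3\Z)$ with a general embedding theorem for permutable subgroups, rather than by the exhaustive search that the GAP routine performs.

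First I would dispose of $G=\GL_2(\Z/2\Z)$, which has order $6$ and is isomorphic to $S_3$. Its subgroups are the trivial group, three conjugate subgroups of order $2$, the unique subgroup $A_3$ of order $3$, and $G$ itself, and the normal ones are exactly $1$, $A_3$ and $G$. If $H=\langle\tau\rangle$ is one of the order-$2$ subgroups and $K$ is a different order-$2$ subgroup, then $|HK|=|H|\,|K|/|H\cap K|=4$, which does not divide $6$, so $HK$ cannot be a subgroup; by Lemma~\ref{d2.1}(1) the subgroup $H$ is not permutable. Hence every permutable subgroup of $S_3$ is normal.

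For $G=\GL_2(\Z/3\Z)$, of order $48$, I would first record the lattice of normal subgroups. Using $Z(G)=\{\pm I\}$ of order $2$, the derived subgroup $[G,G]=\SL_2(\Z/3\Z)$ of order $24$, and the fact that the Sylow $2$-subgroup $Q_8$ of $\SL_2(\Z/3\Z)$ is characteristic there and hence normal in $G$, one checks that the only proper normal subgroups of $G$ are $1$, $Z=\{\pm I\}$, $Q_8$ and $\SL_2(\Z/3\Z)$. The key external input is the Maier--Schmid embedding theorem: for a permutable subgroup $H$ of a finite group $G$ one has $H/H_G\le Z_\infty(G/H_G)$, where $Z_\infty$ is the hypercenter. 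Suppose, for contradiction, that $H$ is permutable but not normal, and set $N=H_G\in\{1,Z,Q_8,\SL_2(\Z/3\Z)\}$. I would then compute the relevant hypercenters: $Z_\infty(G)=Z$ (since $G/Z\cong S_4$ is centerless, the upper central series of $G$ stabilizes at $Z$); $G/Z\cong S_4$ and $G/Q_8\cong S_3$ are centerless, so their hypercenters are trivial; and $G/\SL_2(\Z/3\Z)\cong C_2$ equals its own hypercenter. In each case the inclusion $H/N\le Z_\infty(G/N)$ forces $H$ to coincide with one of the listed normal subgroups, contradicting the choice of $H$. Therefore every permutable subgroup of $\GL_2(\Z/3\Z)$ is normal.

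The hard part will be the second group. The brute-force alternative (the route the GAP routine automates) must test permutability of every non-normal subgroup, of which there are many up to conjugacy, so the real work is to replace this search by structural facts. Concretely, the delicate points are pinning down the complete normal-subgroup lattice of $\GL_2(\Z/3\Z)$ and verifying that the proper quotients $S_4$ and $S_3$ are centerless, since it is precisely this that collapses each hypercenter $Z_\infty(G/N)$ to something already normal in $G$. Once the Maier--Schmid theorem is invoked, no case-by-case permutability computation remains, which is the advantage of this route over the direct verification.
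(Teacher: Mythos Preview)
Your proposal is correct and takes a genuinely different route from the paper. The paper simply invokes the GAP routine displayed just before Lemma~\ref{lem3.1} to enumerate all subgroups of $\GL_2(\Z/2\Z)$ and $\GL_2(\Z/3\Z)$ and test each for permutability and normality; no mathematical argument is given beyond the computer check. Your treatment of $\GL_2(\Z/2\Z)\cong S_3$ is the obvious hand computation, and for $\GL_2(\Z/3\Z)$ you replace the exhaustive search by the Maier--Schmid embedding $H/H_G\le Z_\infty(G/H_G)$, feeding in the short list of normal subgroups $1,\ Z,\ Q_8,\ \SL_2(\Z/3\Z),\ G$ and the observation that the quotients $G/Z\cong S_4$ and $G/Q_8\cong S_3$ have trivial hypercenter while $G/\SL_2\cong C_2$ is its own hypercenter. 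What this buys you over the paper's approach is a human-checkable proof that also explains \emph{why} no non-normal permutable subgroup can occur; what the GAP route buys is that it needs no external theorem. One small point: you might flag Maier--Schmid explicitly as a citation, since it is not among the results the paper itself records, and it is doing all the work in the second case.
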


Now, we consider the cases when either $n>2$ or $n=2$ but $D$ contains at least $4$ elements.
For a division ring $D$, an integer $n>1$ and $1\le i,j\le n$, we denote by $E_{ij}$ the matrix unit, that is, the matrix whose  $(i,j)$-entry is $1$ and all the others are $0$. For each $1\le i\neq j\le n$ and $\alpha\in D$, denote $t_{ij}(\alpha):=I_n+\alpha E_{ij}$,  and $d_i(\alpha):=I_n+(\alpha-1)E_{ii}$. It is clear that $t_{ij}(\alpha)^{-1}=t_{ij}(-\alpha)$, $d_{i}(-1)^{-1}=d_{i}(-1)$ and $d_{i}(-1)t_{ij}(\alpha)d_{i}(-1)=t_{ij}(-\alpha)$. It is well-known that the special linear group $\SL_n(D)$ is generated by the set $\{t_{ij}(\alpha) \mid 1\le i,j\le n, i\ne j, \alpha\in D \}$. 
\begin{theorem}\label{permutable}
Let $D$ be a division ring and $n$ an integer. Assume that either $n>2$ or $n=2$ but $D$ contains at least $4$ elements. Then, every permutable subgroup of $\GL_n(D)$ is normal.
\end{theorem}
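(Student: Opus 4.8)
The plan is to prove normality by checking invariance of $N$ under conjugation by a generating set of $\GL_n(D)$, and the engine of the whole argument will be a clean observation about involutions. First I would record the following consequence of permutability: if $w\in\GL_n(D)$ satisfies $w^2=I$, then $w$ normalizes $N$. Indeed, by Lemma~\ref{d2.1} the product $N\langle w\rangle=\langle w\rangle N$ is a subgroup, and since $\langle w\rangle=\{I,w\}$ this set is $N\cup Nw$. Hence for $a\in N$ the element $wa$ lies in $N\cup Nw$: in the first case $w=(wa)a^{-1}\in N$, and in the second case $waw^{-1}\in N$. So either $w\in N$, in which case $w$ normalizes $N$ trivially, or $waw^{-1}\in N$ holds for every $a\in N$; in both cases $wNw^{-1}=N$.

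Next I would use the identities stated just before the theorem to exhibit every transvection as a product of such involutions. Put $u=d_i(-1)$ and $v=t_{ij}(\alpha)d_i(-1)$. Then $u^2=I$, and the relation $d_i(-1)t_{ij}(\alpha)d_i(-1)=t_{ij}(-\alpha)$ gives $v^2=t_{ij}(\alpha)\,t_{ij}(-\alpha)=I$, while $vu=t_{ij}(\alpha)$. (In characteristic $2$ one has $d_i(-1)=I$ and $t_{ij}(\alpha)^2=t_{ij}(2\alpha)=I$, so the transvection is itself an involution.) Thus each $t_{ij}(\alpha)$ is a product of at most two involutions, each of which normalizes $N$ by the previous step, so every transvection normalizes $N$. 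Since $\SL_n(D)$ is generated by the transvections, it follows that $sNs^{-1}=N$ for all $s\in\SL_n(D)$; in particular $N\cap\SL_n(D)\trianglelefteq\SL_n(D)$.

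Finally I would invoke the classical structure of $\SL_n(D)$: under the present hypotheses $\SL_n(D)$ is perfect and $\SL_n(D)/Z(\SL_n(D))$ is simple. Hence the normal subgroup $N\cap\SL_n(D)$ is either all of $\SL_n(D)$ or central. If $N\supseteq\SL_n(D)$, then $N$ is normal because $\GL_n(D)/\SL_n(D)$ is abelian. If instead $N\cap\SL_n(D)\subseteq Z(\GL_n(D))$, I would show that $N$ is central. For $a\in N$ and $s\in\SL_n(D)$, the element $sas^{-1}a^{-1}$ lies in $N$ (as $\SL_n(D)$ normalizes $N$) and has trivial Dieudonn\'e determinant, hence lies in $N\cap\SL_n(D)\subseteq Z(\GL_n(D))$; a routine check then shows $s\mapsto sas^{-1}a^{-1}$ is a homomorphism $\SL_n(D)\to Z(\GL_n(D))$, which is necessarily trivial since $\SL_n(D)$ is perfect. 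Thus $a$ centralizes $\SL_n(D)$, and since the transvections act irreducibly on $D^n$ this forces $a$ to be a central scalar. Therefore $N\subseteq Z(\GL_n(D))$, so $N$ is central and in particular normal.

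The step I expect to be the main obstacle is the last one, namely upgrading ``$\SL_n(D)$ normalizes $N$'' to genuine normality in the full group $\GL_n(D)$. This is exactly where the numerical hypotheses are indispensable: they are precisely what guarantee perfectness of $\SL_n(D)$ and simplicity of $\SL_n(D)/Z(\SL_n(D))$, which collapse the problem to the two alternatives $\SL_n(D)\subseteq N$ or $N$ central. The small cases where this structure fails, $\GL_2(\Z/2\Z)$ and $\GL_2(\Z/3\Z)$, are exactly the ones already dispatched by the computation in Lemma~\ref{lem3.1}.
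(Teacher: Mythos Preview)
Your proof is correct and follows essentially the same route as the paper's: both arguments show that the involutions $d_i(-1)t_{ij}(\alpha)$ (equivalently, your $u$ and $v$) normalize $N$ because $[N\langle w\rangle:N]\le 2$ for any involution $w$, and hence $\SL_n(D)$ normalizes $N$. The only difference is packaging at the end: the paper disposes of the step you flag as the main obstacle in one line by citing \cite[Theorem~4.9]{Bo_Ar_57} (a non-central subgroup of $\GL_n(D)$ normalized by $\SL_n(D)$ must contain $\SL_n(D)$), whereas you unpack this via perfectness and simplicity of $\SL_n(D)/Z(\SL_n(D))$ together with the commutator-homomorphism trick---a valid and self-contained substitute for the citation.
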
	 
\begin{Proof}
Let $N$ be a permutable subgroup of $\GL_n(D)$. Clearly, we can assume that $N$ is non-central, and then it suffices to prove that $N$ contains the group $\SL_n(D)$. For $1\le i\neq j\le n$  and $\alpha\in D$, denote by $t'_{ij}(\alpha)=d_{i}(-1)t_{ij}(\alpha)$. We claim that $\SL_n(D)$ is contained in the subgroup of $\GL_n(D)$ generated by $\{t'_{ij}(\alpha)\mid 1\le i\neq j\le n,  \alpha\in D\}$. Observe that  $$t_{ij}(\alpha)=d_{i}(-1)^{-1}t'_{ij}(\alpha)=d_{i}(-1) t'_{ij}(\alpha)=d_{i}(-1)t_{ij}(0) t'_{ij}(\alpha)=t'_{ij}(0) t'_{ij}(\alpha),$$ so the subgroup of $\GL_n(D)$ generated by $\{t_{ij}(\alpha)\mid 1\le i\ne j\le n,  \alpha\in D\}$ is contained in the subgroup generated by $ \{t'_{ij}(\alpha)\mid 1\le i\ne j\le n, \alpha\in D\}$. Since $\{t_{ij}(\alpha)\mid 1\le i\ne j\le n,  \alpha\in D\}$ generates $\SL_n(D)$,   $\SL_n(D)$ is contained in the subgroup generated by $\{t'_{ij}(\alpha)\mid 1\le i\ne j\le n, \alpha\in D\}$. The claim is shown. Now, for  $1\le i\neq j\le n$ and $\alpha\in D$, if $Q=N \langle t'_{ij}(\alpha)\rangle$, then $Q$ is a subgroup of $\GL_n(D)$ since $N$ is permutable in $\GL_n(D)$. As $$t'_{ij}(\alpha)^2=(d_{i}(-1)t_{ij}(\alpha))^2=d_{i}(-1)t_{ij}(\alpha)d_{i}(-1)t_{ij}(\alpha)=t_{ij}(-\alpha)t_{ij}(\alpha)=I_n,$$ the index $[Q:N]$ equals either to $1$ or $2$, which implies that the subgroup $N$ is normal in $Q$. As a corollary, $t'_{ij}(\alpha)N t'_{ij}(\alpha)^{-1}\le N$. Therefore, $N$ is normal in $$\langle t'_{ij}(\alpha)\mid 1\le i,j\le n, i\ne j, \alpha\in D\rangle.$$
In particular, $xNx^{-1}\le N$ for every $x\in \SL_n(D)$ by the above claim. Thus, according to \cite[Theorem 4.9]{Bo_Ar_57} $\SL_n(D)\le N$. The proof is complete.
\end{Proof}
	
Combining Lemma~\ref{lem3.1} and Theorem~\ref{permutable}, we get the main result of this section.
\begin{theorem}\label{main section 3}
Every permutable subgroup of  general skew linear groups of degree $n\geq 2$ is normal. $\square$
\end{theorem}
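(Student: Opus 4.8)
The final statement, Theorem \ref{main section 3}, asserts that every permutable subgroup of $\GL_n(D)$ with $n \geq 2$ is normal. This is explicitly stated to follow by combining Lemma \ref{lem3.1} (the GAP computation handling the two small cases $\GL_2(\Z/2\Z)$ and $\GL_2(\Z/3\Z)$) and Theorem \ref{permutable} (which covers $n > 2$, and $n = 2$ when $|D| \geq 4$). So the proof is essentially an exhaustive case split, and the plan is to verify that these two results together exhaust all possibilities for $(D, n)$ with $n \geq 2$.

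The plan is as follows. First I would fix a permutable subgroup $N \le_p \GL_n(D)$ with $n \geq 2$ and partition the argument according to the size of $D$ and the value of $n$. The main dichotomy is whether the hypothesis of Theorem \ref{permutable} is met, namely whether $n > 2$, or $n = 2$ with $|D| \geq 4$. In every such case Theorem \ref{permutable} applies directly and gives that $N$ is normal, with nothing further to check. The complementary case is precisely $n = 2$ together with $|D| \leq 3$; since $D$ is a division ring, the only finite division rings of order at most $3$ are the fields $\Z/2\Z$ and $\Z/3\Z$ (there is no division ring of order $1$, as a division ring has at least the two distinct elements $0$ and $1$). Hence the complementary case consists of exactly the two groups $\GL_2(\Z/2\Z)$ and $\GL_2(\Z/3\Z)$, which is exactly the content of Lemma \ref{lem3.1}.

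Assembling these, I would conclude that for arbitrary $D$ and arbitrary $n \geq 2$, the subgroup $N$ is normal: if $(D,n)$ satisfies the hypothesis of Theorem \ref{permutable} we invoke that theorem, and otherwise we are in one of the two finite cases covered by Lemma \ref{lem3.1}. Since these alternatives are mutually exclusive and jointly exhaustive, the proof is complete.

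The only genuine point requiring care — and what I expect to be the sole subtle step — is justifying the exhaustiveness of the case split: one must observe that a division ring has at least two elements, so that ``$|D| \leq 3$'' forces $D$ to be one of the two prime fields $\Z/2\Z$ or $\Z/3\Z$, and in particular that there is no division ring with exactly $1$ element and no noncommutative division ring of such small order. Once this elementary classification of the small cases is in hand, there is no further obstacle; the theorem is a formal combination of the two preceding results and carries no additional mathematical content beyond recording that their hypotheses cover all $n \geq 2$.
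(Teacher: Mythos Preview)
Your proposal is correct and matches the paper's approach exactly: the paper states Theorem~\ref{main section 3} with a $\square$ and no proof, explicitly noting just before it that the result follows by combining Lemma~\ref{lem3.1} and Theorem~\ref{permutable}. Your observation that the complementary case $n=2$, $|D|\le 3$ reduces to $D\in\{\Z/2\Z,\Z/3\Z\}$ is the only point needing comment, and it is precisely what the paper leaves implicit.
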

Before the end of this section, it would be useful to observe that Theorem \ref{permutable} together with \cite[Theorem 3.3]{Pa_Ngoc-Bien-Hai_17} gives the following result.
\begin{theorem}\label{th:3.4} Let $\GL_n(D)$ be the general linear group of degree $n\geq 2$ over an infinite division ring  $D$. Assume that $N$ is a non-central subgroup of  $\GL_n(D)$. Then, the following conditions are equivalent:
	\begin{enumerate}
		\item $N$ is permutable in $\GL_n(D)$.
		\item $N$ is almost subnormal in $\GL_n(D)$.
		\item $N$ is subnormal in $\GL_n(D)$.
		\item $N$ is normal in $\GL_n(D)$.
		\item $N$ contains $\SL_n(D)$.
	\end{enumerate}
\end{theorem}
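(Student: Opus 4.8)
The plan is to obtain Theorem~\ref{th:3.4} by grafting the present Theorem~\ref{permutable} onto the equivalences already available for non-central subgroups. Indeed, \cite[Theorem 3.3]{Pa_Ngoc-Bien-Hai_17} supplies, for a non-central subgroup $N$ of $\GL_n(D)$ over an infinite division ring $D$ with $n\geq 2$, the equivalence of conditions (2), (3), (4) and (5); I shall treat the notion of almost subnormality together with these four equivalences as a black box coming from that reference. What remains is to fold permutability into the chain, i.e.\ to prove $(1)\Leftrightarrow(4)$.

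For $(4)\Rightarrow(1)$ I would simply recall the already-noted fact that every normal subgroup is permutable: if $N\trianglelefteq\GL_n(D)$ then $gN=Ng$ for every $g$, whence $NM=MN$ for every subgroup $M$, which is condition (3) of Lemma~\ref{d2.1}. This direction uses neither non-centrality nor the infinitude of $D$.

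For the converse $(1)\Rightarrow(4)$ I would invoke Theorem~\ref{permutable}, the only task being to verify that its hypotheses hold under the standing assumptions. If $n>2$ there is nothing to check; if $n=2$, then $D$ infinite forces $|D|\geq 4$. In either case Theorem~\ref{permutable} applies, so every permutable subgroup of $\GL_n(D)$---in particular $N$---is normal. Combining $(1)\Leftrightarrow(4)$ with the equivalences $(2)\Leftrightarrow(3)\Leftrightarrow(4)\Leftrightarrow(5)$ of the cited theorem yields the equivalence of all five conditions.

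I do not anticipate a genuine obstacle, since both substantive inputs are in hand; the single point deserving care is the exact role of the \emph{non-central} hypothesis. It is inessential for the equivalence $(1)\Leftrightarrow(2)\Leftrightarrow(3)\Leftrightarrow(4)$---the centre of $\GL_n(D)$ is normal, hence permutable and subnormal---but it is precisely what permits (5) to join the list, since a central normal subgroup satisfies (1)--(4) yet cannot contain $\SL_n(D)$. Thus non-centrality is the ingredient that must be supplied to \cite[Theorem 3.3]{Pa_Ngoc-Bien-Hai_17} to make (5) equivalent to the rest, and plays no further part in the argument sketched here.
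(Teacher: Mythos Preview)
Your proposal is correct and follows precisely the paper's own argument: the paper simply observes that Theorem~\ref{permutable} together with \cite[Theorem 3.3]{Pa_Ngoc-Bien-Hai_17} yields Theorem~\ref{th:3.4}, and you have spelled out exactly this combination, including the check that $n=2$ with $D$ infinite gives $|D|\ge 4$. Your added remark on the role of non-centrality is accurate and goes a bit beyond what the paper states, but the core argument is identical.
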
	
\section{The existence of non-subnormal permutable subgroups in Mal'cev-Neumann division rings and group algebras}
As we have seen in Theorem \ref{main section 3}, if $n\geq 2$, then every permutable subgroup of $\GL_n(D)$ is normal. Also, if $D$ is infinite, then Theorem \ref{th:3.4} shows that for $n\geq 2$, in $\GL_n(D)$ a subgroup $N$ is permutable iff it is subnormal. In this section, we are interested in the case $D^*=\GL_1(D)$.  More exactly, we will construct examples of division rings and group algebras whose unit groups contain  permutable subgroups that are not subnormal. Before giving some examples, we present in brief the construction of  Mal'cev-Neumann division rings. 
	 
Let $G$ be a \textit{total ordered group} with order $\preceq$, that is, $\preceq$ is a total order and for every $a,b,c\in G$, if $a\preceq b$, then $ac\preceq bc$ and $ca\preceq cb$. A subset $S$ of $G$ is called \textit{well-ordered} (briefly, \textit{WO}) if every non-empty subset of $S$ has a least element. For $S$ is a non-empty WO subset of $G$, we denoted by $\min(S)$ the least element in $S$. Let $K$ be a field. 
For formal sums of the form $\alpha=\sum\limits_{g\in G}a_gg$, where $a_g\in K$, we set $\supp(\alpha)=\{g\in G\mid a_g\ne 0\}$ and call this subset the {\it support} of $\alpha$. Put $$K((G))=\left\{\alpha=\sum\limits_{g\in G}a_gg\mid \supp(\alpha) \text{ is WO } \right\}.$$ 	 For every $\alpha=\sum\limits_{g\in G}a_gg, \beta=\sum\limits_{g\in G}b_gg\in K((G))$, we define addition and multiplication as follows:  $$\alpha+\beta=\sum\limits_{g\in G}(a_g+b_g)g$$ and $$\alpha\beta=\sum\limits_{t\in G}\left( \sum\limits_{gh=t} a_gb_h\right)t.$$ The above operations are well-defined \cite{T.Y.Lam}, and moreover, we have the following result.
	
\begin{lemma}{\rm \cite[Theorem  14.21]{T.Y.Lam}}\label{lem 4.2}
$K((G))$ is a division ring.
\end{lemma}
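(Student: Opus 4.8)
The statement to prove is that $K((G))$, the Mal'cev–Neumann series ring, is a division ring. The plan is to proceed in two stages: first establish that $K((G))$ is a ring (associative, with the given operations well-defined), and then show every nonzero element has a two-sided inverse. The crucial preliminary fact, which I would isolate at the start, is that the class of well-ordered subsets of $G$ is closed under the operations forced on us by the definitions: a finite union of WO sets is WO, and for WO sets $S,T$ the product set $ST=\{st\mid s\in S,\ t\in T\}$ is again WO, and moreover each element $t\in ST$ has only finitely many factorizations $t=sh$ with $s\in S$, $h\in T$. This finiteness is exactly what makes the coefficient $\sum_{gh=t}a_gb_h$ a finite (hence legitimate) sum, so the multiplication is well-defined; I expect this to be the technical heart of the setup rather than the inversion itself.

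First I would record the well-ordering lemmas. To see that $ST$ is WO with finite fibers, suppose not; then one can extract an infinite strictly descending sequence $t_1\succ t_2\succ\cdots$ in $ST$ (or an element with infinitely many factorizations), write each $t_k=s_k h_k$, and pass to a subsequence on which $\{s_k\}$ is non-decreasing using that $S$ is WO (every sequence in a WO set has a non-decreasing subsequence). Along that subsequence the $h_k$ must then be strictly descending in $T$, contradicting that $T$ is WO. This standard Neumann argument simultaneously yields that $ST$ is WO and that fibers are finite, so both sum and product land back in $K((G))$. Associativity and distributivity then follow by the usual bookkeeping, interchanging finite sums, and $1=1_K\cdot e$ (where $e$ is the identity of $G$) is the unit.

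The substantive step is constructing inverses. Take a nonzero $\alpha=\sum_g a_g g$ and let $g_0=\min(\supp\alpha)$, which exists because the support is WO and nonempty. Factoring out the monomial $a_{g_0}g_0$, I would write $\alpha=a_{g_0}g_0(1-\beta)$ where $\beta$ has support contained in the strictly positive part $\{g\succ e\}$ (after left-multiplying by $(a_{g_0}g_0)^{-1}=a_{g_0}^{-1}g_0^{-1}$, a unit since monomials are clearly invertible). It then suffices to invert $1-\beta$, and the natural candidate is the geometric series $\sum_{n\ge 0}\beta^n$. The key point to verify is that this formal sum genuinely defines an element of $K((G))$: one must check that $\bigcup_{n\ge0}\supp(\beta^n)$ is WO and that each $t\in G$ lies in $\supp(\beta^n)$ for only finitely many $n$, so that the coefficients are again finite sums. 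Both follow from the Neumann-type lemma applied to the positive support of $\beta$ — because $\supp\beta$ sits strictly above $e$, the supports of $\beta^n$ march off to the right and each fixed $t$ is reached by only boundedly many powers. Granting this, $(\sum_n\beta^n)(1-\beta)=(1-\beta)(\sum_n\beta^n)=1$ by telescoping, giving a two-sided inverse of $1-\beta$ and hence of $\alpha$.

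The main obstacle is entirely in the well-ordering combinatorics: proving that $ST$ is WO with finite factorization fibers, and the analogous statement for the union of supports of all powers $\beta^n$. These are the points where the well-ordering hypothesis is genuinely used, and where a naive argument would fail for arbitrary ordered supports. Once those finiteness statements are in hand, both the ring axioms and the inversion are formal manipulations of convergent-in-the-order-topology sums. I would therefore front-load the two Neumann lemmas, state them cleanly, and reduce everything else to them; indeed, since the excerpt cites this as \cite[Theorem 14.21]{T.Y.Lam}, I expect the intended proof to invoke exactly this machinery rather than reprove it, but the sketch above is how one establishes it from scratch.
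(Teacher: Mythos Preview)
Your sketch is correct and is precisely the standard Mal'cev--Neumann argument (Neumann's well-ordering lemma for product sets and finite fibers, reduction to $1-\beta$ with strictly positive support, inversion by the geometric series) that appears in the cited reference \cite[Theorem~14.21]{T.Y.Lam}. The paper itself gives no proof at all---it simply invokes Lam---so your write-up is not merely consistent with but strictly more detailed than what the paper offers.
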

The division ring $K((G))$ is called the \textit{Mal'cev-Neumann division ring} of $G$ over $K$. Hence, the group algebra $KG=\left\{\alpha=\sum\limits_{g\in G}a_gg\mid \supp(\alpha) \text{ is finite } \right\}$ is a subalgebra of $K((G))$. To construct our example, we borrow the following result.
	
\begin{lemma} {\rm (\cite[Lemma 10 (1)]{Pa_AaBi_19})} \label{lem 4.3} Let $D=K((G))$ be the Mal'cev-Neumann division ring of a total ordered group $G$ over a field $K$. Then, the function $v : D^*\to G, \alpha\mapsto \min(\supp(\alpha)),$ is a surjective group morphism. $\square$
\end{lemma}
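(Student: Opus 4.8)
The plan is to verify the two claimed properties of the map $v:D^*\to G$ sending $\alpha\mapsto\min(\supp(\alpha))$: that it is a group homomorphism, and that it is surjective. Throughout I would exploit the fact that $G$ is totally ordered and that every element of $D^*$ has a nonempty well-ordered support, so that $\min(\supp(\alpha))$ is always defined for $\alpha\neq 0$.

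First I would handle surjectivity, which is the easy half. For any $g\in G$, the element $1\cdot g\in D^*$ has support $\{g\}$, so $v(1\cdot g)=g$; hence $v$ is onto. The substance lies in the homomorphism property, so I would spend the bulk of the argument there. Fix nonzero $\alpha=\sum_{g}a_gg$ and $\beta=\sum_{h}b_hh$, and set $g_0=\min(\supp(\alpha))$ and $h_0=\min(\supp(\beta))$. I want to show $v(\alpha\beta)=g_0h_0$, i.e.\ that $g_0h_0$ is the least element of $\supp(\alpha\beta)$ and that its coefficient is nonzero. From the multiplication formula the coefficient of $t$ in $\alpha\beta$ is $\sum_{gh=t}a_gb_h$. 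The key point is that, because the order is translation-invariant on both sides, any product $gh$ with $g\in\supp(\alpha)$ and $h\in\supp(\beta)$ satisfies $gh\succeq g_0h\succeq g_0h_0$, so no term of the product can fall strictly below $g_0h_0$; thus $g_0h_0$ is a lower bound for $\supp(\alpha\beta)$. Moreover the only way to write $t=g_0h_0$ as a product $gh$ with $g\succeq g_0$ and $h\succeq h_0$ is $g=g_0$, $h=h_0$ (here I would use the cancellative, order-compatible structure of $G$ to rule out any other factorization contributing to that coefficient). Hence the coefficient of $g_0h_0$ in $\alpha\beta$ is exactly $a_{g_0}b_{h_0}\neq 0$, so $g_0h_0$ genuinely lies in $\supp(\alpha\beta)$ and equals its minimum. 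This gives $v(\alpha\beta)=g_0h_0=v(\alpha)v(\beta)$.

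The main obstacle, and the step I would be most careful about, is justifying that $g_0h_0$ admits a unique factorization as a product of a $\supp(\alpha)$-element and a $\supp(\beta)$-element; this is where well-ordering and translation-invariance must be combined correctly, since in a general ordered group there could a priori be several pairs $(g,h)$ with $gh=g_0h_0$, and I must show all but $(g_0,h_0)$ are excluded by the inequalities $g\succeq g_0$, $h\succeq h_0$. Concretely, if $gh=g_0h_0$ with $g\succeq g_0$ and $h\succeq h_0$, then $g\succ g_0$ would force $gh\succ g_0h\succeq g_0h_0$ by left and right compatibility, a contradiction, so $g=g_0$ and then cancellation yields $h=h_0$. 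Once this uniqueness is in hand the remainder is immediate, and since $v$ is multiplicative on the group $D^*$ it automatically respects inverses, completing the verification that $v$ is a surjective group morphism.
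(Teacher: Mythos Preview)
Your argument is correct: the surjectivity is immediate via $g\mapsto 1\cdot g$, and for multiplicativity your use of two-sided translation-invariance of $\preceq$ to show that $g_0h_0$ is a lower bound for $\supp(\alpha)\cdot\supp(\beta)\supseteq\supp(\alpha\beta)$, together with the uniqueness of the factorization $g_0h_0=g_0\cdot h_0$ among pairs in $\supp(\alpha)\times\supp(\beta)$, is exactly the standard verification; the coefficient $a_{g_0}b_{h_0}$ is nonzero because $K$ is a field. Note that the paper itself supplies no proof of this lemma---it is merely quoted from \cite{Pa_AaBi_19}---so there is no in-paper argument to compare against; your write-up is the expected direct proof.
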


\begin{example}\label{x4.1}{\rm 
Let $H$ be a group such that $H$ contains a subgroup $M$ that is permutable and not subnormal in $H$ as in Lemma~\ref{non-subnormal}. Assume that $H$ is generated by $\{a_i\}_{i\in I}$. Let $G$ be the free group generated by $\{x_i\}_{i\in I}$ with the Magnus order (as in the
proof of \cite[Theorem 6.31]{T.Y.Lam}) and the group morphism $\rho :G\to H$ defined by $\rho(x_i)=a_i$ for every $i\in I$. Then, $\rho$ is surjective. Let $K$ be a field and $D=K((G))$ the Mal'cev-Neumann division ring of $G$ over $K$. Put $$u=\rho \circ v : D^*\to G\to H$$ and $N=u^{-1}(M)=\{\alpha\in D\mid\min\supp(\alpha)\in M \}$. We claim that $N$ is permutable but not subnormal in $D^*$. Since inverse image of a permutable subgroup is permutable, $N$ is permutable in $D^*$. If $N$ is subnormal in $D^*$, that is, there exits a sequence of normal subgroups $$N=N_r\triangleleft N_{r-1}\triangleleft \cdots \triangleleft N_1 \triangleleft N_0=D^*,$$ then $$M=u(N)=u(N_r)\triangleleft u(N_{r-1})\triangleleft \cdots \triangleleft u(N_1) \triangleleft u(N_0)=u(D^*).$$ Since $u$ is surjective, $u(D^*)=H$. Therefore, $M$ is subnormal in $H$, which contradicts the hypothesis. Thus, $N$ is not subnormal in $D^*$.}
\end{example}
\begin{example}\label{group-algebra}{\rm Let $G$, $H$, $M$ and $D=K((G))$ be as in Example~\ref{x4.1}. As we have mentioned, the group algebra $KG$ is a subalgebra of $D$. Moreover, since by the structure of unit elements in $D$, the unit group $(KG)^*= K^* G$.
Now, if $\rho :G\to H$ defined by $\rho(x_i)=a_i$ for every $i\in I$, then $\rho$ is a surjective group morphism, so is $\overline \rho : (KG)^*=K^* G\to H, ag\mapsto \rho(g)$. Hence, $N=(\overline \rho)^{-1} (M)$ is a non-subnormal permutable subgroup of $(KG)^*$.}
\end{example}	

\section{The existence of free subgroups \\in permutable subgroups of $\GL_n(D)$}

The existence of free groups in subnormal subgroups of the general linear group $\GL_n(D)$ over a division ring $D$ have been studied in \cite{Pa_Ngoc-Bien-Hai_17}.
The aim of this section is to investigate the condition of the existence of free groups in permutable subgroups of $\GL_n(D)$. In view of Theorem \ref{main section 3} and \cite[4.5.1]{Bo_Shi-Weh_86}, this problem reduced to the case $n=1$, that is, to the case of $D^*=\GL_1(D)$. For the convenience of 
readers, we restate the result of  \cite[4.5.1]{Bo_Shi-Weh_86} in the following theorem.
\begin{theorem}\label{th:A} {\rm (\cite[4.5.1]{Bo_Shi-Weh_86})} Let $D$ be a division ring which is not an absolute field. Then, every non-central normal subgroup of the general linear group $\GL_n(D)$ of degree $n\geq 2$ contains a free subgroup.
\end{theorem}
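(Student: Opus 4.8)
The plan is to reduce the statement to the existence of a single free subgroup in $\SL_n(D)$, and then to produce such a subgroup inside one $\SL_2$-block over a well-chosen subfield. First I would invoke the classical description of normal subgroups of $\GL_n(D)$ for $n\ge 2$: since $N$ is non-central and normal it is in particular normalized by $\SL_n(D)$, and by \cite[Theorem 4.9]{Bo_Ar_57} (the same input used in the proof of Theorem~\ref{permutable}) this forces $\SL_n(D)\le N$. The hypothesis that $D$ is not an absolute field guarantees that $D$ is infinite, so none of the small exceptional groups $\GL_2(\mathbb{F}_2)$ and $\GL_2(\mathbb{F}_3)$ intervene and the dichotomy ``central or contains $\SL_n(D)$'' applies cleanly. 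Consequently it suffices to exhibit a non-abelian free subgroup of $\SL_n(D)$, and since $\SL_2(F)$ embeds block-diagonally in $\SL_n(D)$ for every subfield $F\subseteq D$, it is enough to find a free subgroup in $\SL_2(F)$ for one suitable $F$.

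Next I would extract from the hypothesis a subfield that is not algebraic over its prime field. If $\Char D=0$, then $\Q\subseteq D$ already suffices. If $\Char D=p$ and $D$ is non-commutative, then Jacobson's commutativity theorem shows that not every element of $D$ can be algebraic over $\mathbb{F}_p$ (else $D$ would be a field); hence $D$ contains an element $t$ transcendental over $\mathbb{F}_p$, giving $F_0:=\mathbb{F}_p(t)\subseteq D$. If instead $D=F$ is a field that is merely not absolute, then by definition $F$ is not algebraic over its prime field and again contains a copy of $\Q$ or of $\mathbb{F}_p(t)$. In all cases $D$ contains a subfield $F_0$ isomorphic to $\Q$ or to $\mathbb{F}_p(t)$, and I would work entirely inside $\SL_2(F_0)$.

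The free subgroup itself I would produce by a ping-pong argument. In characteristic $0$ this is Sanov's theorem: the unipotent matrices $\left(\begin{smallmatrix}1&2\\0&1\end{smallmatrix}\right)$ and $\left(\begin{smallmatrix}1&0\\2&1\end{smallmatrix}\right)$ generate a free group of rank $2$ in $\SL_2(\Q)$. In characteristic $p$ the unipotent approach fails, and this is the step I expect to be the main obstacle: transvections have order $p$, so a torsion-free free group can never be generated by them. Instead I would equip $\mathbb{F}_p(t)$ with a discrete (degree) valuation under which $t$ has absolute value $>1$, and run ping-pong on the projective line of the completion using \emph{semisimple} elements --- a diagonal matrix with a large power of $t$ as eigenvalue together with a conjugate having distinct fixed points --- so that sufficiently high powers act on disjoint attracting/repelling neighborhoods and therefore generate a free group.

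Finally I would assemble the pieces: the free subgroup built in $\SL_2(F_0)$ lies in the block-diagonal copy of $\SL_2(F_0)$ inside $\SL_n(D)\le N$, so $N$ contains a non-abelian free subgroup, as claimed. The delicate points to monitor are the uniform handling of the two characteristics (above all, avoiding torsion when $\Char D=p$) and the verification that the chosen valuation genuinely separates the attracting and repelling sets of the two ping-pong players.
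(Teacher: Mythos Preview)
The paper does not supply its own proof of this statement: Theorem~\ref{th:A} is quoted verbatim from \cite[4.5.1]{Bo_Shi-Weh_86} and used throughout Section~5 as a black box. There is therefore no internal argument to compare yours against.

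That said, your sketch is sound and is essentially how the result is obtained in the cited source. The reduction $N\supseteq \SL_n(D)$ via \cite[Theorem~4.9]{Bo_Ar_57} is legitimate once $D$ is infinite, and ``not an absolute field'' guarantees this (finite fields are absolute, and non-commutative division rings are infinite by Wedderburn's little theorem). Extracting a subfield isomorphic to $\Q$ or to $\mathbb{F}_p(t)$ is precisely the correct use of the hypothesis, and your appeal to Jacobson's theorem in the non-commutative positive-characteristic case is the right mechanism. The final ping-pong step is standard: Sanov in characteristic~$0$, and in characteristic~$p$ a hyperbolic element such as $\diag(t,t^{-1})$ together with a generic conjugate over the local field $\mathbb{F}_p((t^{-1}))$ does the job on $\mathbb{P}^1$. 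If you wish to avoid the analytic verification you flag at the end, a clean alternative in characteristic~$p$ is Nagao's amalgam decomposition $\GL_2(\mathbb{F}_p[t])\cong \GL_2(\mathbb{F}_p)\ast_{B(\mathbb{F}_p)} B(\mathbb{F}_p[t])$, from which a free subgroup of $\SL_2(\mathbb{F}_p[t])$ can be read off directly; this sidesteps any delicacy about separating attracting and repelling sets.
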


For the convenience of further use, we combine Theorem \ref{main section 3} and Theorem \ref{th:A} to state the following result.
\begin{lemma}\label{lem:4.1} Let $D$ be a division ring which is not an absolute field.  Then, every non-central permutable subgroup of the general linear group $\GL_n(D)$ over $D$ of degree $n\geq 2$ contains a free subgroup.
\end{lemma}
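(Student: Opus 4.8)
The plan is to deduce this lemma immediately from the two structural results that precede it, namely Theorem~\ref{main section 3} and Theorem~\ref{th:A}; no new argument is really needed beyond stitching them together. First I would fix a non-central permutable subgroup $N$ of $\GL_n(D)$ with $n\geq 2$. The role of Theorem~\ref{main section 3} is to collapse the notion of ``permutable'' into the much stronger notion of ``normal'': since $n\geq 2$, that theorem guarantees that $N$ is in fact a normal subgroup of $\GL_n(D)$. This is the crucial input, because it lets me forget entirely about the permutability hypothesis and pass to the normal setting where a free-subgroup theorem is already available.

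Next I would invoke the hypothesis that $D$ is not an absolute field, together with the fact that $N$ is non-central. With $N$ now known to be a non-central \emph{normal} subgroup of $\GL_n(D)$ for $n\geq 2$, the hypotheses of Theorem~\ref{th:A} (the Shirvani--Wehrfritz result \cite[4.5.1]{Bo_Shi-Weh_86}) are met verbatim, and that theorem yields a free subgroup inside $N$. Chaining the two implications---permutable $\Rightarrow$ normal (degree $\geq 2$), and non-central normal $\Rightarrow$ contains a free subgroup (non-absolute $D$, degree $\geq 2$)---completes the argument.

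There is essentially no genuine obstacle here: the entire mathematical weight sits inside the two cited theorems, one proved in Section~3 of this paper and the other imported from the literature, so the proof is a one-line combination rather than a fresh derivation. The only point that deserves a moment's care is verifying that the two hypotheses transfer cleanly---that the non-centrality of $N$ is preserved (it is, since normality is deduced for the same $N$) and that the degree restriction $n\geq 2$ is exactly the one required by both inputs (it is). Consequently, I would present this as a short \emph{Proof} environment: let $N\le_p\GL_n(D)$ be non-central; by Theorem~\ref{main section 3}, $N$ is normal in $\GL_n(D)$; since $D$ is not an absolute field, Theorem~\ref{th:A} shows $N$ contains a free subgroup.
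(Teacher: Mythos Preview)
Your proposal is correct and matches the paper's own treatment exactly: the paper presents this lemma explicitly as the combination of Theorem~\ref{main section 3} and Theorem~\ref{th:A}, with no additional argument supplied. Your only extra content is the routine verification that the hypotheses line up, which is harmless.
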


Recall that a field $F$ with a prime subfield $P$ is called \textit{absolute} (or \textit{locally finite}) if every finite subset of F generates over $P$ a finite subfield. It is clear that $F$ is absolute if and only if $P$ is finite and $F$ is algebraic over $P$. The class of  non-absolute fields includes ones of characteristic $0$ and uncountable ones (\cite[Corollary B-2.41, p. 343]{Bo_Ro_15}). 

Before stating the next lemma, we recall some definitions. Let $D$ be a division ring with center $F$. We say that $D$ is \textit{centrally finite} if $D$ is a finite vector space over $F$. If $S$ is a subset of $D$, then $F(S)$ denotes the division subring of $D$ generated by the set $S\cup F$. If for every finite subset $S$ of $D$, the division subring $F(S)$ is a finite dimensional vector space over $F$, then $D$ is called \textit{locally finite}. A division ring $D$ is \textit{weakly locally finite} if for every finite subset $S$ of $D$, the division subring $F(S)$ is centrally finite. It was shown in \cite{Pa_Hai-Ngoc_13} that every locally finite division ring is weakly locally finite. In \cite{Pa_Deo-Bien-Hai_19}, the authors have constructed infinitely many weakly locally finite division rings that are not locally finite. Hence, the class of weakly locally finite division rings is large. The following lemma shows that in such division rings, every non-central permutable subgroup contains a free group.

\begin{lemma}\label{lem:4.2} Let $D$ be a non-commutative weakly locally finite division ring and $N$ a non-central permutable subgroup of $D^*$. Then, $N$ contains a free group.
\end{lemma}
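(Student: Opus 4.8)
The plan is to reduce the statement to the dichotomy furnished by Lemma~\ref{radical} applied to $G=D^*$ and $H=N$, and then in each of the two alternatives to invoke the affirmative solution of Conjecture~\ref{conj:2} for weakly locally finite division rings, due to Hai and Ngoc in \cite{Pa_Hai-Ngoc_13}: in such a $D$, every non-central subnormal subgroup of $D^*$ contains a free subgroup. This deep input is really the whole engine of the argument; the work that remains is purely the organization of the two cases.

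First I would apply Lemma~\ref{radical} with $G=D^*$ and $H=N$, obtaining that either $D^*$ is radical over $N$, or $N$ is subnormal in $D^*$ (of defect at most $2$). In the subnormal case the conclusion is immediate: $N$ is non-central and subnormal, so by \cite{Pa_Hai-Ngoc_13} it contains a free subgroup, and we are done. The only case requiring further thought is therefore the radical one, and this is where I expect the (mild) main obstacle to lie.

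In the radical case I would first observe that $D^*$ itself qualifies as a non-central subnormal subgroup of $D^*$: it is normal in itself, and since $D$ is non-commutative we have $F^*\neq D^*$, so $D^*$ is non-central. Hence, again by \cite{Pa_Hai-Ngoc_13}, $D^*$ contains a free subgroup, freely generated by two elements $a$ and $b$. Because $D^*$ is radical over $N$, there exist positive integers $p$ and $q$ with $a^p\in N$ and $b^q\in N$.

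The closing step is the elementary but essential remark that $a^p$ and $b^q$ generate a free subgroup of rank $2$ inside $\langle a,b\rangle$, and therefore inside $N$. Indeed, any nonempty reduced word in $a^p$ and $b^q$ expands to a reduced, nonempty word in $a$ and $b$, since all the occurring exponents $\pm p$ and $\pm q$ are nonzero; such a word cannot be trivial in the free group $\langle a,b\rangle$. Thus $\langle a^p,b^q\rangle$ is free and is contained in $N$, which finishes the radical case and hence the proof.
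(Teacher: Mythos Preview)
Your argument is correct and follows essentially the same route as the paper: apply the dichotomy of Lemma~\ref{radical}, dispose of the subnormal case directly via \cite{Pa_Hai-Ngoc_13}, and in the radical case pull a free pair $a,b$ from $D^*$ (again via \cite{Pa_Hai-Ngoc_13}) and pass to suitable powers $a^p,b^q\in N$. The only differences are cosmetic: you spell out why $D^*$ is non-central and why $\langle a^p,b^q\rangle$ remains free, both of which the paper leaves implicit.
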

\begin{proof} In view of Lemma \ref{radical}, either $N$ is subnormal in $D^*$ or $D^*$ is radical over $N$. If $N$ is subnormal in $D^*$, then by \cite[Theorem 11]{Pa_Hai-Ngoc_13}, $N$ contains a free subgroup. Now, assume that $D^*$ is radical over $N$. By \cite[Theorem 11]{Pa_Hai-Ngoc_13}, there exists a free subgroup in $D^*$ generated by two elements, say, $a, b\in D^*$. Since $D^*$ is radical over $N, a^m\in N$ and $b^r\in N$ for some positive integers $m$ and $r$. Then, the subgroup $\ts{a^m, b^r}$ of $N$ generated by $a^m$ and $b^r$ is free group. 
\end{proof}	
Combining two lemmas above, we get the following theorem.
\begin{theorem}\label{th:4.3} Let $D$ be a division ring, $n$ a positive integer, and $\GL_n(D)$ the general linear group of degree $n$ over $D$. Assume that $N$ is a non-central permutable subgroup of $\GL_n(D)$. Then, $N$ contains a free subgroup if one of the following two conditions holds:
	\begin{enumerate}	
		\item $n\geq 2$ and $D$ is not an absolute field.
		
		\item $n=1$ and $D$ is a non-commutative weakly locally finite division ring. $\square$
	\end{enumerate}	
	\end{theorem}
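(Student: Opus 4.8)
The plan is to prove the theorem by splitting into the two cases listed in the statement, since each hypothesis has been tailored to match one of the two preceding lemmas. In both cases the strategy is the same: reduce the claim to an already-established result, so that no new argument is needed beyond recognizing which lemma applies and checking that its hypotheses are met. This is why the theorem is presented as a consequence of ``combining the two lemmas above,'' and I would structure the write-up to make that combination explicit.

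For case (1), where $n \geq 2$ and $D$ is not an absolute field, the conclusion is verbatim the content of Lemma~\ref{lem:4.1}: a non-central permutable subgroup $N$ of $\GL_n(D)$ with $n \geq 2$ over a non-absolute $D$ contains a free subgroup. So here I would simply invoke Lemma~\ref{lem:4.1} and be done. For case (2), where $n = 1$, the first step is the observation that $\GL_1(D) = D^*$, so that a non-central permutable subgroup $N$ of $\GL_1(D)$ is exactly a non-central permutable subgroup of the multiplicative group $D^*$. Since $D$ is assumed non-commutative and weakly locally finite, Lemma~\ref{lem:4.2} then yields directly that $N$ contains a free subgroup. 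Combining the two cases completes the proof.

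In terms of difficulty, the theorem itself is a routine assembly, and the genuine obstacles lie inside the two lemmas being cited rather than in the combination. For case (1), the substance is hidden in the reduction carried out by Lemma~\ref{lem:4.1}, which rests on the normality result Theorem~\ref{main section 3} together with the Shirvani--Wehrfritz free-subgroup theorem (Theorem~\ref{th:A}). For case (2), the crux is the dichotomy supplied by Lemma~\ref{radical}: either $N$ is subnormal in $D^*$ (handled via the weakly-locally-finite subnormal theory) or $D^*$ is radical over $N$, and in the radical branch one must pass from a free pair $a,b \in D^*$ to the subgroup $\langle a^m, b^r\rangle \le N$ and argue that suitable powers of free generators remain free. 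Thus, while the final theorem is obtained almost formally, I would keep the proof short and transparent, deferring all the real work to the two lemmas and merely verifying that the stated hypotheses place us in their respective domains of applicability.
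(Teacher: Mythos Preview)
Your proposal is correct and matches the paper's approach exactly: the theorem is stated with a $\square$ and the only justification given is ``Combining two lemmas above,'' i.e., Lemma~\ref{lem:4.1} for case~(1) and Lemma~\ref{lem:4.2} (after the identification $\GL_1(D)=D^*$) for case~(2). There is nothing to add.
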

		
\section{The existence of free subgroups in permutable subgroups of $(FG)^*$}
In this section, we apply the results in the previous sections for an open problem on the existence of free subgroups of permutable subgroups in locally finite group algebras. The existence of free groups in group algebras $FG$ which is inspired by the theory of representations of groups is an interesting topic. For a good survey of the existence of free groups in group algebras, we refer to \cite{Pa_GoRi_13}. Moreover, some negative forms of the existence of free groups have been even received more attention. For example, group algebras whose unit groups satisfy some group identity (e.g., see \cite{Pa_GiJeVa_94,Pa_GiSeVa_97,Pa_GoMa_91,Pa_Li_99, Pa_Me_81}), groups algebras with an Engel unit group (e.g., see \cite{Pa_BiRa_19, Pa_Bo_06, Pa_BoKh_92, Pa_Ra_16, Pa_Ri_00}) and group algebras whose unit groups are nilpotent or solvable (e.g., see \cite{Pa_Ba_71, Pa_BaCo_68, Pa_Bo_05, Pa_BoKh_77, Pa_Kh_72, Pa_Ra_16_2}).

The idea of this section is based on the following fact: if $G$ is finite, then $FG$ is a finite-dimensional vector space over $F$.  Via the Wedderburn-Artin theorem,  modulo the Jacobson radical, $FG$ is isomorphic to a direct product of some matrix rings over division rings. Hence,  modulo the normal subgroup $1+J(FG)$, the unit group $(FG)^*$ is isomorphic to a product of some general  skew linear groups. As a corollary, in some cases, the existence of free groups in $(FG)^*$ reduces to the existence of free groups of general skew linear groups we mentioned in previous sections. Therefore, it is promising to get new results by applying results in previous sections to unit groups  of group algebras. Notice that the algebraic properties of group algebras in two cases of characteristic $0$ and positive characteristic are every different. So, we study these cases separately.
	
\begin{theorem}\label{thm 6.1}
Let $G$ be a locally finite group and $F$ a field of characteristic $0$. Then, every non-abelian permutable subgroup of $(FG)^*$ contains a free subgroup.
\end{theorem}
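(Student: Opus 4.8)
The plan is to reduce the problem to the finite-dimensional results already available, using the locally finite structure of $G$ together with the characteristic-zero hypothesis. First I would set up the reduction to finite subgroups. Since $G$ is locally finite, it is the directed union of its finite subgroups $G_i$, so $FG$ is the directed union of the finite-dimensional subalgebras $FG_i$, and $(FG)^*$ is the directed union of the $(FG_i)^*$. Given a non-abelian permutable subgroup $N$ of $(FG)^*$, I would locate two non-commuting elements $u,v \in N$; these lie in some $(FG_i)^*$ for a single finite subgroup $G_i$ (since finitely many elements of $FG$ have support in a finite subgroup). The key structural tool is that permutability passes to preimages and is compatible with the inclusion $(FG_i)^* \hookrightarrow (FG)^*$ via Lemma~\ref{d2.1}, so that $N \cap (FG_i)^*$ is a permutable, and still non-abelian, subgroup of $(FG_i)^*$.

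Next I would exploit the characteristic-zero hypothesis to understand $(FG_i)^*$. Because $\Char F = 0$, Maschke's theorem makes $FG_i$ semisimple, so by Wedderburn--Artin $FG_i \cong \prod_{k} \Mat_{n_k}(D_k)$ for finitely many division rings $D_k$, each finite-dimensional over its center, which is a finite extension of $F$ and hence again of characteristic $0$. Consequently $(FG_i)^* \cong \prod_k \GL_{n_k}(D_k)$. The non-abelian permutable subgroup $N \cap (FG_i)^*$ projects to some factor $\GL_{n_k}(D_k)$ with non-abelian, hence non-central, image; by Lemma~\ref{d2.1} this projection is permutable in $\GL_{n_k}(D_k)$. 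I would then split into two cases according to the size of $n_k$: if $n_k \geq 2$, each $D_k$ (being of characteristic $0$) is not an absolute field, so Lemma~\ref{lem:4.1} supplies a free subgroup; if $n_k = 1$, then $D_k$ is a finite-dimensional, hence centrally finite, non-commutative division ring of characteristic $0$ (it cannot be commutative, else the image would be abelian), which is weakly locally finite, so Theorem~\ref{th:4.3}(2) applies.

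The remaining point is to transport the free subgroup back from the factor to $N$ itself. The projection $\pi_k\colon (FG_i)^* \to \GL_{n_k}(D_k)$ is a group homomorphism, and I would lift the two free generators along $\pi_k$; since freeness of a pair of elements is detected by their images under a homomorphism, any two preimages generate a free subgroup of $N \cap (FG_i)^* \subseteq N$. I expect the main obstacle to be verifying cleanly that the relevant subgroup survives each reduction step with both its permutability and its non-abelianness intact — in particular, ensuring that the image of $N$ in the chosen Wedderburn factor is genuinely non-central rather than merely nontrivial. The delicate case is $n_k = 1$, where one must argue that the non-commutativity witnessed by $u,v$ forces at least one Wedderburn factor $D_k$ to be a genuinely non-commutative division ring, so that Theorem~\ref{th:4.3}(2) is applicable; this is where the characteristic-zero hypothesis, via semisimplicity and central finiteness, does the essential work.
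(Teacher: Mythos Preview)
Your proposal is correct and follows the same overall architecture as the paper: pick two non-commuting elements of $N$, pass to the finite subgroup $H$ they generate, observe that $M=N\cap(FH)^*$ is permutable and non-abelian in $(FH)^*$, use Maschke and Wedderburn--Artin (available because $\Char F=0$) to write $(FH)^*\cong\prod_k\GL_{n_k}(D_k)$, project onto a factor where the image is non-abelian, and lift a free subgroup back to $M\subseteq N$.

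The one organizational difference is worth noting. The paper inserts Lemma~\ref{radical} at the level of $(FH)^*$, splitting into the case where $(FH)^*$ is radical over $M$ (handled by Gon\c calves' theorem \cite{Pa_Go_85} that $(FH)^*$ itself contains a free subgroup, then pulling powers into $M$) and the case where $M$ is subnormal in $(FH)^*$ (handled via Wedderburn--Artin and Theorem~\ref{th:4.3} applied to the subnormal projection). You instead bypass this dichotomy entirely: since the projection $\pi_k(M)$ is \emph{permutable} in $\GL_{n_k}(D_k)$, Theorem~\ref{th:4.3} applies directly, and the radical/subnormal split is already absorbed inside the proof of that theorem. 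Your route is therefore slightly more economical and avoids the external citation to \cite{Pa_Go_85}; the paper's route has the minor advantage that in Case~2 one only needs the subnormal version of the $\GL_{n_k}(D_k)$ result, which predates Theorem~\ref{th:4.3}. Both are valid.

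One small point of phrasing: what you call ``permutability passes to preimages'' along the inclusion $(FG_i)^*\hookrightarrow(FG)^*$ is really the restriction property $N\le_p G,\ K\le G\Rightarrow N\cap K\le_p K$; this is straightforward (and is exactly what the paper uses), but Lemma~\ref{d2.1} alone does not state it, so in a final write-up you should give the one-line verification.
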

\begin{Proof} Assume that $N$ is a non-abelian permutable subgroup of $(FG)^*$. Let $\alpha=\sum_{g\in G}\alpha_gg, \beta=\sum_{g\in G}\beta_gg\in N$ such that $\alpha\beta\ne \beta\alpha$. Let $$H=\langle \supp(\alpha)\cup \supp(\beta)\rangle$$ be the subgroup of $G$ generated by $\supp(\alpha)\cup \supp(\beta)$. Since $G$ is locally finite, the subgroup $H$ is finite. Now, consider the group subalgebra $FH$ and the subgroup $M=N\cap (FH)^*$. Then, $FH$ is finite-dimensional over $F$ and $M$ is subnormal in $(FH)^*$. Since $\alpha,\beta$ belong to $(FH)\cap N=M$, the group $M$ is non-abelian. Now, by Lemma~\ref{radical},  either $(FH)^*$ is radical over $N$ or $N$ is subnormal in $(FH)^*$.
\bigskip
		
{\it Case 1:  $(FH)^*$ is radical over $N$.}

\bigskip 

Then, since $N$ is non-abelian, so is $H$. By \cite{Pa_Go_85}, $(FH)^*$ contains a free subgroup of rank $2$, namely, $\langle a,b\rangle$. Moreover, there exist positive integers $n_a,n_b$ such that $a^{n_a}, b^{n_b}\in N$. As a corollary, $\langle a^{n_a}, b^{n_b}\rangle$ is a free subgroup of $N$.
\bigskip
		
{\it Case 2:  $N$ is subnormal in $(FH)^*$.}

\bigskip

Since $\Char(F)=0$, the Jacobson radical $J(FH)=0$ by \cite[Lemma 7.4.2, p.~ 292]{Bo_Pa_77}, which implies that $$FH\cong M_{n_1}(D_1)\times M_{n_2}(D_2)\times \dots \times M_{n_k}(D_k)$$ where $n_1,n_2,\dots, n_k$ are positive integers and $D_1,D_2,\dots,D_k$ are division rings. Moreover, as $FH$ is finite dimensional over $F$, so is every $D_i$ over its center $F_i$. Now we have $(FH)^*\cong \GL_{n_1}(D_1)\times \GL_{n_2}(D_2)\times \dots \times \GL_{n_k}(D_k)$. For every $1\le i\le k$, put $$\pi_i : (FH)^*\to \GL_{n_i}(D_i), (a_1,a_2,\dots,a_k)\mapsto a_i,$$ the  $i$-th projection of $(FH)^*$ onto $\GL_{n_i}(D_i)$. Since $M$ is non-abelian, there exists an integer $1\le i\le k$ such that $\pi_i(M)$ is non-abelian (if all $\pi_1(M), \dots, \pi_k(M)$ are abelian, then $M\le \pi_1(M)\times \dots \times\pi_k(M)$ is abelian, a contradiction). Obviously, $\pi_i(M)$ is subnormal in $\GL_n(D)$, so $\pi_i(M)$ contains a free subgroup by Theorem~\ref{th:4.3}. As a corollary, $M$ contains a free subgroup.
\end{Proof}
	
Now, we move to the positive characteristic case. Let $F$ be a field and $R$ an $F$-algebra. If $R$ is algebraic over $F$, then the Jacobson radical $J(R)$ of $R$ is nil \cite[Corollary 4.19]{T.Y.Lam}. Moreover, 
	
\begin{lemma}\label{lem 6.2} Let $R$ be an algebraic algebra over $F$ with Jacobson radical $J(R)$. If $\overline a \in (R/J(R))^*$, then there exists $u\in R^*$ such that $\overline u=\overline a$.
\end{lemma}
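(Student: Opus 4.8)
The plan is to show that one may simply take $u=a$: every representative $a\in R$ of a unit $\bar a\in(R/J(R))^{*}$ is already a unit of $R$, so passing to the quotient modulo $J(R)$ creates no genuinely new units. Thus there will be nothing to ``lift'' beyond recognizing that $a$ itself does the job.

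First I would unwind the hypothesis. Since $\bar a$ is a unit of $R/J(R)$, there is some $b\in R$ with $\overline{ab}=\overline{ba}=\bar 1$, that is, $ab-1\in J(R)$ and $ba-1\in J(R)$; write $ab=1+j$ and $ba=1+j'$ with $j,j'\in J(R)$. The key step is then to check that $ab$ and $ba$ are units of $R$. Here I use that $R$ is algebraic over $F$, so $J(R)$ is nil by the result cited just before the lemma; hence $j$ and $j'$ are nilpotent, say $j^{m}=0$, and $1+j$ is invertible in $R$ with inverse the finite geometric sum $\sum_{k=0}^{m-1}(-j)^{k}$, and likewise for $1+j'$. (Equivalently one could invoke the standard fact that $1+J(R)\subseteq R^{*}$, which does not even require nilness.)

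Finally I would assemble a two-sided inverse for $a$. From $ab=1+j\in R^{*}$ we get $a\bigl(b(ab)^{-1}\bigr)=1$, so $a$ has a right inverse; from $ba=1+j'\in R^{*}$ we get $\bigl((ba)^{-1}b\bigr)a=1$, so $a$ has a left inverse. In any unital ring an element with both a left and a right inverse is a unit, the two inverses necessarily coinciding, so $a\in R^{*}$. Taking $u=a$ yields $\bar u=\bar a$, as required.

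The only point demanding a little care—hardly an obstacle—is extracting a genuine two-sided inverse rather than a merely one-sided one; this is precisely why I retain both congruences $ab\equiv1$ and $ba\equiv1$ and invert $ab$ and $ba$ separately. I would note that the algebraicity hypothesis enters only to provide, through the nilness of $J(R)$, the elementary geometric-series inverse of $1+j$; the conclusion in fact holds for the Jacobson radical of an arbitrary unital ring.
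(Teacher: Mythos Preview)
Your argument is correct. The paper does not actually supply a proof of this lemma; it simply refers the reader to the first part of the proof of \cite[Lemma~5.1]{Pa_Li_00}. What you have written is precisely the standard direct argument (and almost certainly what appears at the cited location): lift $\bar a$ to any representative $a$, use that $ab,ba\in 1+J(R)\subseteq R^{*}$ to produce a right and a left inverse of $a$, and conclude $a\in R^{*}$, so that $u=a$ works. Your remark that the algebraicity hypothesis is not really needed---since $1+J(R)\subseteq R^{*}$ for any unital ring---is also correct; in the paper the hypothesis is present only to match the setting of the surrounding section and the cited reference, where the nilness of $J(R)$ is the form in which invertibility of $1+j$ is used.
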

\begin{Proof}
See the first part of the proof of \cite[Lemma 5.1]{Pa_Li_00}.
\end{Proof}
	
\begin{lemma}\label{lem 6.3}
Let $F$ be a field of characteristic $p>0$, and assume that $R$ is an $F$-algebra which is algebraic over $F$. Then, $R^*$ contains a free group if and only if so does $R/J(R)$.
\end{lemma}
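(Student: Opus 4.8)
The plan is to study the canonical group homomorphism $\pi : R^* \to (R/J(R))^*$ induced by the quotient map $R \to R/J(R)$. By Lemma~\ref{lem 6.2}, $\pi$ is surjective, and its kernel is exactly $1 + J(R)$. The decisive observation, which is where both hypotheses (that $\Char F = p > 0$ and that $R$ is algebraic over $F$) enter, is that $\ker\pi = 1 + J(R)$ is a periodic group. Indeed, since $R$ is algebraic over $F$, its radical $J(R)$ is nil, so every $x \in J(R)$ satisfies $x^m = 0$ for some $m$. Choosing $k$ with $p^k \ge m$ and using that in characteristic $p$ the binomial coefficients $\binom{p^k}{i}$ vanish for $0 < i < p^k$ (together with the fact that $1$ commutes with $x$), we obtain $(1+x)^{p^k} = 1 + x^{p^k} = 1$. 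Hence every element of $1 + J(R)$ has order a power of $p$; in particular $1 + J(R)$ is torsion and therefore contains no nontrivial free subgroup.

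For the ``only if'' direction, suppose $R^*$ contains a free subgroup $H = \langle a, b\rangle$. The intersection $H \cap \ker\pi$ is, by the Nielsen--Schreier theorem, a free group; being at the same time a subgroup of the periodic group $1 + J(R)$, it is a torsion free group, hence trivial. Consequently $\pi|_H$ is injective, and $\pi(H) = \langle \pi(a), \pi(b)\rangle$ is a free subgroup of rank $2$ inside $(R/J(R))^*$. Thus $R/J(R)$ contains a free group.

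For the ``if'' direction, which does not require the characteristic hypothesis, assume there exist $\bar a, \bar b \in (R/J(R))^*$ generating a free group of rank $2$. Using Lemma~\ref{lem 6.2}, lift these to units $a, b \in R^*$ and set $H = \langle a, b\rangle$. Then $\pi|_H : H \to \langle \bar a, \bar b\rangle$ is a surjection onto a free group. Since free groups are projective objects in the category of groups, this surjection splits, yielding an embedding of $\langle \bar a, \bar b\rangle$ into $H \le R^*$. Hence $R^*$ contains a free group.

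The heart of the argument, and the only genuinely delicate point, is establishing that $\ker\pi$ is periodic: this is exactly where $\Char F = p > 0$ (via the freshman's-dream identity on $p^k$-th powers) and the nilness of $J(R)$ (via the algebraicity of $R$) are both indispensable. Once this is in place, both implications follow formally from standard group-theoretic facts --- subgroups of free groups are free and torsion free, and free groups are projective.
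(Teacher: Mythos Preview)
Your proof is correct and rests on the same core computation as the paper's: that $1+J(R)$ is a $p$-group because $J(R)$ is nil and $(1+x)^{p^k}=1+x^{p^k}$ in characteristic $p$. The only difference is packaging: where you invoke Nielsen--Schreier and projectivity of free groups, the paper argues directly---any putative relation among $\bar a,\bar b$ forces a relation among $a,b$ via the $p^k$-th power trick, and conversely the lifts $u,v$ of free generators $\bar a,\bar b$ are themselves free generators since any relation would push forward---so your machinery, while sound, is heavier than needed.
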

\begin{Proof}
Consider the natural projection $\rho : R\to R/J(R), a\mapsto \overline a=a+I$.
Assume that $R^*$ contains a free group $\langle a,b\rangle$ of rank $2$. We claim that $\langle \overline a,\overline b\rangle$ is a free group of $(R/J(R))^*$. Assume that there is a relation $$\overline c_i^{n_1}\overline c_2^{n_2}\dots \overline c_t^{n_t}=\overline 1,$$ where $c_1,c_2,\dots,c_t$ are alternate in $\{a, b\}$  and $n_1,n_2,\dots,n_t$ are non-zero integers. Hence, $1-c_i^{n_1} c_2^{n_2}\dots c_t^{n_t}\in J(R)$. By \cite[Corollary 4.19]{T.Y.Lam}, $1-c_i^{n_1} c_2^{n_2}\dots c_t^{n_t}$ is nil, that is, there exists a positive integer $m$ such that $(1-c_i^{n_1} c_2^{n_2}\dots c_t^{n_t})^{m}=0$. Since $p^m\ge m$, one has $(1-c_i^{n_1} c_2^{n_2}\dots c_t^{n_t})^{p^m}=0$, equivalently, $1-(c_i^{n_1} c_2^{n_2}\dots c_t^{n_t})^{p^m}=0$. Therefore, $1=(c_i^{n_1} c_2^{n_2}\dots c_t^{n_t})^{p^m}$ which contradicts the fact that $\langle a,b\rangle$ is a free group.

Conversely, if $\langle \overline a, \overline b\rangle$ is a free subgroup of $R/J(R)$, then by Lemma~\ref{lem 6.2}, there exist $u,v\in R^*$ such that $\overline u=\overline a$ and $\overline v=\overline b$. Hence, $\langle u, v\rangle$ is a free subgroup of $R^*$.
\end{Proof}
	
Let $G$ be a group and $p$ a prime number. We denote by $O_p(G)$ the maximal normal $p$-subgroup of $G$ if $G$ contains a normal $p$-subgroup. Otherwise, put $O_p(G)=~1$. The following lemma gives a relation between $O_p(G)$ and the Jacobson radical of $FG$.   
\begin{lemma}\label{lem 6.4}\cite[Lemma 2.1]{Pa_Sa_98}
If $G$ is a locally finite group and $F$ is a field of characteristic $p>0$, then $G\cap (1+J(FG))=O_p(G)$.
\end{lemma}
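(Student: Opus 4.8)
The plan is to prove the two inclusions separately, after first recording that $K := G\cap(1+J(FG))$ is a normal subgroup of $G$. Since $J(FG)$ is a two-sided ideal, $1+J(FG)$ is closed under the multiplication of $(FG)^*$ and under inversion: for $g\in G\subseteq (FG)^*$ with $g-1\in J(FG)$ one has $\overline{g^{-1}}=\overline{g}^{-1}=\overline 1$ in $FG/J(FG)$, so $g^{-1}\in 1+J(FG)$. Hence $K$ is a subgroup, and from $x(g-1)x^{-1}\in J(FG)$ for every $x\in G$ it follows that $K\trianglelefteq G$. It then suffices to prove $O_p(G)\subseteq K$ and $K\subseteq O_p(G)$.

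For the inclusion $O_p(G)\subseteq K$, write $P=O_p(G)$ and consider the relative augmentation ideal $I(P)=\ker\bigl(FG\to F[G/P]\bigr)$, which is generated as a left ideal by $\{u-1\mid u\in P\}$. First I would show that $I(P)$ is a nil ideal. Given $x\in I(P)$, write $x=\sum_i\alpha_i(u_i-1)$ with $\alpha_i\in FG$ and $u_i\in P$; letting $H$ be the subgroup generated by $\bigl(\bigcup_i\supp(\alpha_i)\bigr)\cup\{u_i\}$, local finiteness of $G$ makes $H$ finite and places $x$ in the relative augmentation ideal of the finite normal $p$-subgroup $P\cap H$ inside $FH$. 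For a finite $p$-group in characteristic $p$ the augmentation ideal $\omega$ of $F[P\cap H]$ is nilpotent, and normality of $P\cap H$ in $H$ gives $FH\cdot\omega=\omega\cdot FH$, so $(FH\cdot\omega)^m=FH\cdot\omega^m=0$ for suitable $m$; hence $x$ is nilpotent. Thus $I(P)$ is nil, so $I(P)\subseteq J(FG)$, and for each $u\in P$ we obtain $u-1\in J(FG)$, i.e. $P\subseteq K$.

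For the reverse inclusion, I would use that $FG$ is algebraic over $F$ (every element lies in some finite-dimensional $FH$), so by \cite[Corollary 4.19]{T.Y.Lam} the radical $J(FG)$ is nil. Take $g\in K$ of order $n$, which is finite since $G$ is locally finite. Then $g-1$ is nilpotent, and since $1,g,\dots,g^{n-1}$ are linearly independent we may compute inside $F[g]\cong F[x]/(x^n-1)$, where $(x-1)^k\equiv 0$ forces $x^n-1\mid (x-1)^k$. Writing $n=p^a m$ with $p\nmid m$ gives $x^n-1=(x^m-1)^{p^a}$ with $x^m-1$ separable; as the only root of $(x-1)^k$ is $1$, all $m$-th roots of unity must equal $1$, so $m=1$ and $g$ has $p$-power order. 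Hence $K$ is a normal $p$-subgroup of $G$, so $K\subseteq O_p(G)$ by maximality, completing the equality.

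The step I expect to be the main obstacle is the nilpotence of the relative augmentation ideal $I(O_p(G))$: the local reduction to finite subgroups must be arranged so that an arbitrary element of $I(P)$ genuinely lands in the augmentation ideal of $P\cap H$ inside $FH$, and one must invoke (or reprove) the nilpotence of $\omega$ for a finite $p$-group in characteristic $p$. By contrast, the inclusion $K\subseteq O_p(G)$ is comparatively clean once the already-cited fact that $J(FG)$ is nil is in hand.
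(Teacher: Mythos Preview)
The paper does not supply its own proof of this lemma: it is merely quoted from \cite[Lemma~2.1]{Pa_Sa_98}, so there is nothing in the present paper to compare your argument against. What you have written is, in effect, a self-contained reproof of Sahai's result, and the overall strategy---showing that $K=G\cap(1+J(FG))$ is a normal subgroup, then proving $O_p(G)\subseteq K$ via nilpotence of the relative augmentation ideal and $K\subseteq O_p(G)$ via nilness of $J(FG)$---is the standard one and is sound.

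A couple of minor points to tighten. In the first inclusion, you should say explicitly that a nil two-sided ideal is always contained in the Jacobson radical (this is elementary: if $a$ is nilpotent then $1-ra$ is a unit for every $r$), since that is the step linking ``$I(P)$ is nil'' to ``$I(P)\subseteq J(FG)$''. In the second inclusion, your divisibility argument $x^n-1\mid (x-1)^k$ in $F[x]$ is fine, but the phrase ``all $m$-th roots of unity must equal $1$'' implicitly passes to an algebraic closure; it would be cleaner to say that the only irreducible factor of $(x-1)^k$ is $x-1$, while $x^m-1$ is separable and hence squarefree, forcing $x^m-1=x-1$ and $m=1$. With these clarifications the proof is complete and correct.
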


\begin{theorem} \label{thm 6.5} Let $F$ be a non-absolute field of characteristic $p>0$ and $G$ a locally finite group. Assume that $N$ is a non-abelian permutable subgroup of $(FG)^*$ containing $G$. Then, 
\begin{enumerate}
\item If $G'$ is not a $p$-group, then $N$ contains a free subgroup.		
\item If $G'$ is a $p$-group, then $(FG)^*$ contains no free subgroups. 
\end{enumerate}
\end{theorem}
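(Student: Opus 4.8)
The plan is to reduce everything to the finite-dimensional semisimple setting by passing to the quotient $\overline R=R/J(R)$, using two facts that are available because $FG$ is algebraic over $F$ (every element lies in some $FH$ with $H$ finite, hence in a finite-dimensional, so algebraic, subalgebra): Lemma \ref{lem 6.3}, which lets me detect free subgroups of a unit group modulo the Jacobson radical, and Lemma \ref{lem 6.4}, which identifies $G\cap(1+J(FG))$ with $O_p(G)$. The two parts are then treated separately, part (2) being the quick one.

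For part (2) I would show that $G'$ being a $p$-group forces $\overline{FG}=FG/J(FG)$ to be commutative. Indeed $G'$ is a normal $p$-subgroup, so $G'\le O_p(G)=G\cap(1+J(FG))$ by Lemma \ref{lem 6.4}; hence for all $g,h\in G$ the commutator $[g,h]$ maps to $1$ in $\overline{FG}$, i.e. the images $\overline g$ commute pairwise. Since these images generate $\overline{FG}$ as an $F$-algebra, $\overline{FG}$ is commutative, its unit group is abelian and contains no free subgroup, and Lemma \ref{lem 6.3} transfers this to $(FG)^*$. Note this argument uses neither $N$ nor the non-absoluteness of $F$.

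For part (1) the strategy mirrors the proof of Theorem \ref{thm 6.1}, corrected for $J(FG)\neq 0$. Since $G'$ is not a $p$-group, I would pick an element of $G'$ that is not a $p$-element, write it as a product of commutators, and let $H$ be the subgroup generated by the elements occurring; $H$ is finite by local finiteness, and $H'$ is not a $p$-group, so $H$ is non-abelian and, by Lemma \ref{lem 6.4} applied to the finite group $H$, the image $\overline H\cong H/O_p(H)$ is non-abelian in $\overline{FH}=FH/J(FH)$. By Wedderburn--Artin, $\overline{FH}\cong\prod_i M_{n_i}(D_i)$, where each $D_i$ is finite-dimensional over a center containing $F$, hence centrally finite (so weakly locally finite) and, as $F$ is non-absolute, not an absolute field; non-commutativity of $\overline{FH}$ forces some factor $M_{n_i}(D_i)$ to be non-commutative. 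On that factor, Theorem \ref{th:A} (if $n_i\ge 2$) or \cite[Theorem 11]{Pa_Hai-Ngoc_13} (if $n_i=1$ and $D_i$ is non-commutative) shows the unit group contains a free subgroup; since $\GL_{n_i}(D_i)$ is a direct factor of $\overline{FH}^*$, the latter contains a free subgroup, and Lemma \ref{lem 6.3} carries this back to $(FH)^*$.

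To land a free subgroup inside $N$, I would set $M=N\cap(FH)^*$, which is permutable in $(FH)^*$ and non-abelian (it contains $H$), and apply Lemma \ref{radical}. If $(FH)^*$ is radical over $M$, then suitable powers of a free pair in $(FH)^*$ lie in $M$ and still generate a free group, so $M\le N$ contains one. If instead $M$ is subnormal in $(FH)^*$, then its image $\rho(M)$ under $\rho:(FH)^*\to\overline{FH}^*$ is subnormal and non-abelian in $\overline{FH}^*\cong\prod_j\GL_{n_j}(D_j)$; projecting to a factor on which it is non-abelian yields a non-central subnormal subgroup of some $\GL_{n_i}(D_i)$, which contains a free subgroup by \cite[Theorem 11]{Pa_Hai-Ngoc_13} (case $n_i=1$) or, since $D_i$ is infinite so that Theorem \ref{th:3.4} makes subnormal equal to normal, by Theorem \ref{th:A} (case $n_i\ge 2$). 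Pulling free generators back along the surjection $M\twoheadrightarrow\pi_i\rho(M)$ gives a free subgroup of $M\le N$. I expect the main obstacle to be precisely this final bookkeeping: guaranteeing that non-commutativity, subnormality, and the free subgroup all survive both the passage to $\overline{FH}$ and the Wedderburn projections, and that the lift stays inside $N$ rather than merely inside $(FH)^*$. The radical/subnormal dichotomy of Lemma \ref{radical} is what makes the latter possible.
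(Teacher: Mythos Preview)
Your proposal is correct and follows essentially the same approach as the paper: localize to a finite subgroup $H$, pass to $FH/J(FH)$ via Wedderburn--Artin and Lemmas~\ref{lem 6.3}--\ref{lem 6.4}, and invoke the free-subgroup results for $\GL_{n_i}(D_i)$. The only differences are organizational---the paper argues part~(1) by contrapositive (showing that if a subnormal $N$ has no free subgroup then every $z\in G'$ is a $p$-element) and applies the dichotomy of Lemma~\ref{radical} to $N$ in $(FG)^*$ rather than to $M=N\cap(FH)^*$ in $(FH)^*$---but the underlying ideas coincide, and your citations for the $\GL_{n_i}(D_i)$ step are in fact slightly more precise than the paper's blanket reference to Theorem~\ref{th:4.3}.
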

	 
\begin{proof} 1. According to Lemma~\ref{radical}, either $N$ is subnormal in $(FG)^*$ or $(FG)^*$ is radical over $N$. We first consider the case when $N$ is subnormal in $(FG)^*$. We will show that, in this case, if $N$ contains no free subgroups, then $G'$ is a $p$-group. Since $N$ is non-abelian, there exist $x=\sum\limits_{g\in G}a_g g$ and $y=\sum\limits_{g\in G}b_g g$ in $N$ such that $xy\ne yx$. Let $z=g_1h_1g_1^{-1}h_1^{-1}\dots g_mh_mg_m^{-1}h_m^{-1}\in G'$. Let $H$ be the subgroup of $G$ generated by the set $$\{g_1,\dots,g_m, h_1,\dots,h_m\}\cup \supp(x)\cup \supp(y).$$ It is easy to see that the group algebra $FH$ is a subalgebra of $FG$ and is finite-dimensional over $F$ because $H$ is finite.	
	 If $M=N\cap (FH)^*$, then obviously $M$ is subnormal in $(FH)^*$. According to the Wedderburn-Artin theorem, $FH/J(FG)\cong M_{n_1}(D_1)\times M_{n_2}(D_2)\times \dots \times M_{n_k}(D_k)$, where $n_1,n_2,\dots,n_k$ are positive integers and $D_1, D_2,\dots,D_k$ are division rings which are finite-dimensional over their centers $F_1, F_2,\dots, F_k$ respectively. Hence, $$(FH/J(FG))^*\cong \GL_{n_1}(D_1)\times \GL_{n_2}(D_2)\times \dots \times \GL_{n_k}(D_k).$$
	 Put $\rho : (FH)^*\to (FH/J(FG))^*, a\mapsto \overline a,$ and  for every $1\le i\le k$, put $$\pi_i : (FH)^*\to \GL_{n_i}(D_i), (a_1,a_2,\dots,a_k)\mapsto a_i,$$ the  $i$-th projection of $(FH/J(FH))^*$ onto $\GL_{n_i}(D_i)$. If there exists $1\le i\le k$ such that $\pi_i\rho(M)$ is non-abelian in $\GL_{n_i}(D_i)$, then $\pi_i\rho(M)$ contains a free subgroup by Theorem~\ref{th:4.3}, so does $M$, which contradicts the assumption. Hence, $\pi_i\rho(M)$ is abelian for every $1\le i\le k$, which implies that $\rho(M)$ is also abelian, that is, $M(1+J(FH))/(1+J(FH))$ is abelian. Therefore, $M'\subseteq 1+J(FH)$. In particular, $z=1+\alpha$, where $\alpha\in J(FH)$. Observe that $J(FH)$ is nil \cite[Corollary 4.19]{T.Y.Lam}, so $\alpha^s=0$ for some positive integer. It implies that ${z}^{p^s} =(1+\alpha)^{p^s}=1+\alpha^{p^s}=1$, so the order of $z$ is $p^n$ for some non-negative integer $n$. This holds when $z$ ranges over $G'$, so $G'$ is a $p$-group. 
	 
Now, consider the second case when $(FG)^*$ is radical over $N$. Assume that $G'$ is not a $p$-group. Then, according to the first case, $(FG)^*$ contains a free subgroup $\langle a, b\rangle$ of rank $2$. There exist two positive integers $\alpha,\beta$ such that $a^\alpha, b^\beta\in N$, which implies that $\langle a^\alpha, b^\beta\rangle$ is a free subgroup of $N$. The proof of (1) is complete.

2. Assume that $G'$ is a $p$-group. By Lemma~\ref{lem 6.3}, it suffices to show that the unit group $(FG/J(FG))^*$ contains no free subgroups. For $g,h\in G$, we have $gh=chg$, where $c=ghg^{-1}h^{-1}\in G'\subseteq O_p(G)$. By Lemma~\ref{lem 6.4}, $c=1+z$ for some $z\in J(FG)$. Hence, $gh-hg=(1+z)hg-hg=zgh\in J(FG)$. As a corollary, $(F G/J(F G))^*$ is abelian, so it contains no free subgroups. The proof of (2) is complete. 
\end{proof}

\end{document}